\numberwithin{equation}{section}
\theoremstyle{definition}
\newtheorem{define}{Definition}[subsection]
\newtheorem{example}[define]{Example}
\newtheorem{construction}[define]{Construction}
\theoremstyle{remark}
\newtheorem{remark}[define]{Remark}
\theoremstyle{plain}
\newtheorem{theo}[define]{Theorem}
\newtheorem{lemma}[define]{Lemma}
\newtheorem{prop}[define]{Proposition}
\newcommand{\X}{\mathscr X}
\newcommand{\C}{\mathscr C}
\newcommand{\F}{\mathscr F}
\newcommand{\D}{\mathscr D}
\newcommand{\cat}{\mathbf{Cat}}
\newcommand{\set}{\mathbf{Set}}
\newcommand{\dfib}{\mathbf{DFib}}
\newcommand{\Span}{\mathbf{Span}}
\newcommand{\dblset}{\mathbb{S}\mathbf{et}}
\newcommand{\mon}{\mathbf{Mon}}
\newcommand{\elt}{\mathbf{Elt}}
\newcommand{\dblelt}{\mathbb E\mathbf{l}}
\newcommand{\dblfib}{\mathbb D\mathbf{Fib}}
\newcommand{\dbllax}{\mathbb L\mathbf{ax}}
\newcommand{\spn}{\mathbb S\mathbf{pan}}
\newcommand{\prof}{\mathbb P\mathbf{rof}}
\newcommand{\dblmod}{\mathbb M\mathbf{od}}
\newcommand{\lax}{\mathbf{Lax}}
\newcommand{\ob}{\mathrm{ob}}
\newcommand{\Ob}{\mathbf{Ob}}
\newcommand{\vdbl}{\mathbf{vDbl}}
\newcommand{\dom}{\mathrm{dom}}
\newcommand{\cod}{\mathrm{cod}}
\newcommand{\src}{\mathrm{src}}
\newcommand{\tgt}{\mathrm{tgt}}
\def\slashedarrowfill@#1#2#3#4#5{%
  $\m@th\thickmuskip0mu\medmuskip\thickmuskip\thinmuskip\thickmuskip
  \relax#5#1\mkern-7mu%
  \cleaders\hbox{$#5\mkern-2mu#2\mkern-2mu$}\hfill
  \mathclap{#3}\mathclap{#2}%
  \cleaders\hbox{$#5\mkern-2mu#2\mkern-2mu$}\hfill
  \mkern-7mu#4$%
}
\def\rightslashedarrowfill@{%
  \slashedarrowfill@\relbar\relbar\mapstochar\rightarrow}
\newcommand\xslashedrightarrow[2][]{%
  \ext@arrow 0055{\rightslashedarrowfill@}{#1}{#2}}
\def\slashedrightarrow{\xslashedrightarrow{}}
\begin{document}

\title{Discrete Double Fibrations}

\author{Michael Lambert}

\maketitle

\begin{abstract}
Presheaves on a small category are well-known to correspond via a category of elements construction to ordinary discrete fibrations over that same small category.  Work of R. Par\' e proposes that presheaves on a small double category are certain lax functors valued in the double category of sets with spans.  This paper isolates the discrete fibration concept corresponding to this presheaf notion and shows that the category of elements construction introduced by Par\' e leads to an equivalence of virtual double categories.
\end{abstract}

\tableofcontents

\section{Introduction}

A discrete fibration is functor $F\colon\F\to\C$ such that for each arrow $f\colon C\to FY$ in $\C$, there is a unique arrow $f^*Y\to Y$ in $\F$ whose image under $F$ is $f$. Such functors correspond to ordinary presheaves
\[ \dfib(\C) \simeq [\C^{op},\set]
\]
via a well-known ``category of elements" construction \cite{MacLane}. This is a ``representation theorem" in the sense that discrete fibrations over $\C$ correspond to set-valued representations $\C$.

Investigation of higher-dimensional structures leads to the question of analogues of such well-established developments in lower-dimensional settings. In the case of fibrations, for example, ordinary fibrations have their analogues in ``2-fibrations" over a fixed base 2-category \cite{Buckley}. Discrete fibrations over a fixed base 2-category have their 2-dimensional version in ``discrete 2-fibrations" \cite{MeFibrations}. The justification that the notion of (discrete) fibration is the correct one in each case consists in the existence of a representation theorem taking the form of an equivalence with certain representations of the base structure via a category of elements construction.

In the double-categorical world, R. Par\'e proposes \cite{YonedaThyDblCatz} that certain span-valued, lax functors on a double category $\mathbb B$ are presheaves on $\mathbb B$. This paper aims to isolate the notion of ``discrete double fibration" corresponding to this notion of presheaf. As justification of the proposed definition, Par\'e's double category of elements will be used to exhibit a representation theorem on the pattern of those reviewed above. The technically interesting part of the results is that ultimately an equivalence of virtual double categories is achieved. For this the language of monoids and modules in virtual double categories as in \cite{Operads} will be used.

\subsection{Overview and Motivation}

To give away the answer completely, a discrete double fibration should be a category object in the category of discrete fibrations. This follows the approach to double category theorizing that a ``double \_\_\_" is a category object in the category of \_\_\_'s. The point of the paper is thus to show that this expected definition comports with Par\'e's notion of presheaf by exhibiting the representation theorem as discussed above. This will be proved directly for all the 1-categorical structure involved. 

However, lax functors and their transformations are fragments of a higher double-dimensional structure. That is, modules and their multimodulations make lax functors into a virtual double category. The question, then, arises as to the corresponding virtual double category structure on the discrete fibration side of the desired equivalence. It turns out that there is already well-established language for these structures from \cite{Operads}.

For a category with finite limits $\C$, the category of monoids in the bicategory of spans in $\C$ is equivalent to the category of category objects in $\C$. In fact the correspondence is much stronger than this, since -- under the assumption of a fixed universe of concrete sets -- the data on either side of the equivalence is the same modulo rearrangement of tuples. This general result leads to the special case for $\C = \dfib$, yielding the equivalence
\[ \mon(\Span(\dfib)) \simeq \cat(\dfib).
\]
This is the main clue leading to a natural candidate for virtual double category structures on discrete double fibrations. For monoids in the bicategory part of any double category $\mathbb D$ form the 1-categorical part of the virtual double category of monoids and modules in $\mathbb D$. That is, there is a virtual double category $\dblmod(\mathbb D)$ of modules whose underlying 1-category is precisely monoids in the bicategory part of $\mathbb D$. This leads to the natural candidate for virtual double category structure on discrete double fibrations as $\dblmod(\spn(\dfib))$. The ultimate objective of the paper is thus to show that an appropriate slice of such modules is equivalent to the virtual double category of lax presheaves via the double category of elements construction.

\subsection{Organization and Results}

Section 2 reviews lax functors, their transformations and the elements construction from Par\'e's \cite{YonedaThyDblCatz}. The main definition of ``discrete double fibration" is given as a double functor $P\colon\mathbb E\to\mathbb B$ for which both $P_0$ and $P_1$ are discrete fibrations. This is equivalently a category object in the category of discrete fibrations. The section culminates in the first main contribution of the paper, proved as Theorem \ref{Main Theorem 1} below. This result says that for any strict double category $\mathbb B$, there is an equivalence of 1-categories
\[ \dfib(\mathbb B) \simeq \lax(\mathbb B^{op},\spn)
\]
between the category of discrete double fibrations over $\mathbb B$ and span-valued lax functors on $\mathbb B$, induced by the double category of elements construction. This is achieved by constructing a pseudo-inverse for the double category of elements.

Section 3 introduces the virtual double category structure on lax span-valued functors as in \cite{YonedaThyDblCatz} and \cite{ModuleComposition} with the goal of extending the theorem above to an equivalence of virtual double categories. It turns out that a convenient language for setting up the virtual double category structure on discrete double fibrations is that of monoids and modules in virtual double categories as in \cite{Operads} and later \cite{FrameworkGenMulticatz}.

Section 4 extends the elements construction and its pseudo-inverse to functors of virtual double categories and culminates in the second result, proved as Theorem \ref{Main Theorem 2}. That is, for any strict double category $\mathbb B$, there is an equivalence of virtual double categories
\[ \dblfib(\mathbf B) \simeq \dbllax(\mathbb B^{op},\spn)
\]
induced by the elements functor.

\subsection{Conventions and Notation}

Double categories go back to Ehresmann \cite{Ehresmann}.  Other references include \cite{GP}, \cite{GP2}, and \cite{FramedBicats}. By  the phrase ``double category" we will always mean a strict double category, which is a category object in categories. Adaptations for the pseudo-case in some cases are easy to make but tedious; in other cases these are more subtle and require separate treatment. 

One quirk of our presentation is that, owing to different conventions concerning which arrows are ``vertical" and which are ``horizontal," we prefer to use the more descriptive language of ``arrows," ``proarrows," and ``cells."  Arrows are of course the ordinary arrows between objects (the horizontal arrows in \cite{GP} and \cite{GP2} vs. the vertical ones in \cite{FramedBicats} and \cite{FrameworkGenMulticatz}); whereas the proarrows are the objects of the bicategory part (that is, the vertical ones of \cite{GP} and \cite{GP2} vs. the horizontal ones of \cite{FramedBicats} and \cite{FrameworkGenMulticatz}).  In this language, the arrows of the canonical example of categories and profunctors would be the ordinary functors while the proarrows are the profunctors.  This choice is inspired by the language of a ``proarrow equipment" in \cite{Pro1} and \cite{Pro2}, which sought to axiomatize this very situation as a 2-category ``equipped with proarrows."

Throughout use blackboard letters $\mathbb A$, $\mathbb B$, $\mathbb C$, $\mathbb D$ for double categories.  The 0-part of such $\mathbb D$ is the category of objects and arrows, denoted by `$\mathbb D_0$'.  The 1-part is the category of proarrows and cells, denoted by `$\mathbb D_1$'.  External composition is written with a tensor `$\otimes$'.  The external unit is denoted with $u\colon \mathbb D_0\to\mathbb D_1$. External source and target functors are $\src, \tgt\colon \mathbb D_1\rightrightarrows \mathbb D_0$.  Internal structure is denoted by juxtaposition in the usual order. The terms ``domain" and ``codomain" refer to internal structure, whereas ``source" and ``target" refer to external structure. One result requires the notion of the ``transpose" of a double category $\mathbb D$, denoted here by $\mathbb D^\dagger$. When $\mathbb D$ is strict, $\mathbb D^\dagger$ is a strict double category and transposing extends to a functor $(-)^\dagger\colon \mathbf{Dbl} \to \mathbf{Dbl}$ on the 1-category of strict double categories and double functors.

Script letters $\C$, $\D$, $\X$, denote ordinary categories with objects sets $\C_0$ and arrow sets $\C_1$.  Well-known 1-, 2-, and bi-categories referred to throughout are given in boldface such as $\set$, $\cat$, $\mathbf{Span}$, and $\mathbf{Rel}$. We always refer to the highest level of structure commonly associated with the objects of the category.  Thus, $\cat$ is the 2-category of categories, unless specified otherwise. The main exception is that $\dfib$ is the ordinary category of discrete fibrations. Throughout we take for granted the notions of monoid and category internal to a given category with finite limits. Some 2-categorical concepts are referenced but not in a crucial way. Common double-categories are presented in mixed typeface and named by their proarrows.  In particular $\prof$ is the double category of profunctors; $\mathbb R\mathbf{el}$ is the double category of sets and relations; $\spn$ (rather than $\dblset$) is the double category of sets and spans.

\section{Lax Functors, Elements, and Discrete Double Fibrations}

The main objects of study are lax functors between double categories. The definitions are recalled in this section. The double category of elements is recalled, and it is seen how its construction leads naturally to the definition of a discrete double fibration.

\subsection{Lax Functors}

As groundwork, recall here the definitions of lax functor and their natural transformations.

\begin{define}[See e.g. \S 7.2 of \cite{GP} or \S 1.1.2 of \cite{ModuleComposition}] \label{lax functor defn} A \textbf{lax functor} between (pseudo-) double categories $F\colon\mathbb A\to\mathbb B$ consists of assignments
\[ A\mapsto FA\qquad f\mapsto Ff\qquad m\mapsto Fm\qquad \alpha\mapsto F\alpha
\]
on objects, arrows, proarrows and cells, respecting domains, codomains, sources and targets; and laxity cells
$$\xymatrix{
\ar@{} [dr]|{\phi_A} FA \ar@{=}[d] \ar[r]^{u_{FA}}|-@{|} & FA \ar@{=}[d] & & \ar@{} [drr]|{\phi_{n,m}} FA \ar@{=}[d] \ar[r]^{Fm}|-@{|} & FB \ar[r]^{Fn}|-@{|} & FC \ar@{=}[d] \\
FA \ar[r]_{Fu_A}|-@{|} & FA & & FA \ar[rr]_{F(n\otimes m)}|-@{|} &    & FC
}$$
for each object $A$ and composable proarrows $m\colon A\slashedrightarrow B$ and $n\colon B\slashedrightarrow C$, all subject to the following axioms.
\begin{enumerate}
\item{}[Internal Functoriality] The object and arrows assignments $A\mapsto FA$, $f\mapsto Ff$ above define an ordinary functor $F_0\colon \mathbb A_0\to\mathbb B_0$; and the proarrow and cell assignments define an ordinary functor $F_1\colon \mathbb A_1\to\mathbb B_1$ with respect to internal identities and composition of cells.
\item{}[Naturality] For any $f\colon A\to B$, there is an equality
$$\xymatrix{ \ar@{} [dr]|{u_{Ff}} \cdot \ar[d]_{Ff} \ar[r]^{u_{FA}}|-@{|} & \cdot \ar[d]^{Ff} & & \ar@{} [dr]|{\phi_A} \cdot \ar@{=}[d] \ar[r]^{u_{FA}}|-@{|} & \cdot \ar@{=}[d] \\
\ar@{} [dr]|{\phi_B} \cdot \ar@{=}[d] \ar[r]|-@{|} & \cdot \ar@{=}[d] & = & \ar@{}[dr]|{Fu_f} \cdot \ar[d]_{Ff} \ar[r]|-@{|} & \cdot \ar[d]^{Ff} \\
\cdot \ar[r]_{Fu_B}|-@{|} & \cdot & & \cdot \ar[r]_{Fu_B}|-@{|} & \cdot
}$$
and for any externally composable cells $\alpha\colon m\Rightarrow p$ and $\beta\colon n\Rightarrow q$, there is an equality
$$\xymatrix{ \ar@{}[dr]|{F\alpha} \cdot \ar[d]_{Ff} \ar[r]^{Fm}|-@{|} & \ar@{}[dr]|{F\beta} \cdot \ar[d]^{} \ar[r]^{Fn}|-@{|} & \cdot \ar[d]^{Fh} && \ar@{}[drr]|{\phi_{n,m}} \cdot \ar@{=}[d] \ar[r]^{Fm}|-@{|} & \cdot \ar[r]^{Fn}|-@{|} & \cdot \ar@{=}[d] \\
\ar@{}[drr]|{\phi_{q,p}} \cdot \ar@{=}[d] \ar[r]_{Fp}|-@{|} & \cdot \ar[r]_{Fq}|-@{|} & \cdot \ar@{=}[d] & = & \ar@{}[drr]|{F(\beta\otimes\alpha)} \cdot \ar[d]_{Ff} \ar[rr]|{F(n\otimes m)} & & \cdot \ar[d]^{Ff} \\
\cdot \ar[rr]_{F(q\otimes p)}|-@{|} & & \cdot & & \cdot \ar[rr]_{F(q\otimes p)}|-@{|} & & \cdot 
}$$
of composite cells.
\item{}[Unit and Associativity] Given a proarrow $m\colon A\slashedrightarrow B$, the unit laxity cells satisfy
$$\xymatrix{ \ar@{}[dr]|{\phi_A} \cdot \ar@{=}[d] \ar[r]^{u_{FA}}|-@{|} & \ar@{}[dr]|{Fu_m} \cdot \ar[d]^{} \ar[r]^{Fm}|-@{|} & \cdot \ar@{=}[d] && \ar@{}[drr]|{\cong} \cdot \ar@{=}[d] \ar[r]^{u_{FA}}|-@{|} & \cdot \ar[r]^{Fm}|-@{|} & \cdot \ar@{=}[d] \\
\ar@{}[drr]|{\phi_{m, u_A}} \cdot \ar@{=}[d] \ar[r]_{Fu_A}|-@{|} & \cdot \ar[r]_{Fm}|-@{|} & \cdot \ar@{=}[d] & = & \ar@{}[drr]|{\cong} \cdot \ar@{=}[d] \ar[rr]|{Fm} & & \cdot \ar@{=}[d] \\
\cdot \ar[rr]_{F(m\otimes u_A)}|-@{|} & & \cdot & & \cdot \ar[rr]_{F(m\otimes u_A)}|-@{|} & & \cdot 
}$$
and 
$$\xymatrix{ \ar@{}[dr]|{Fu_m} \cdot \ar@{=}[d] \ar[r]^{Fm}|-@{|} & \ar@{}[dr]|{\phi_B} \cdot \ar[d]^{} \ar[r]^{u_{FB}}|-@{|} & \cdot \ar@{=}[d] && \ar@{}[drr]|{\cong} \cdot \ar@{=}[d] \ar[r]^{Fm}|-@{|} & \cdot \ar[r]^{u_{FB}}|-@{|} & \cdot \ar@{=}[d] \\
\ar@{}[drr]|{\phi_{u_B, m}} \cdot \ar@{=}[d] \ar[r]_{Fm}|-@{|} & \cdot \ar[r]_{Fu_B}|-@{|} & \cdot \ar@{=}[d] & = & \ar@{}[drr]|{\cong} \cdot \ar@{=}[d] \ar[rr]|{Fm} & & \cdot \ar@{=}[d] \\
\cdot \ar[rr]_{F(u_B\otimes m)}|-@{|} & & \cdot & & \cdot \ar[rr]_{F(u_B\otimes m)}|-@{|} & & \cdot 
}$$
and for any three composable proarrows $m\colon A\slashedrightarrow B$, $n\colon B\slashedrightarrow C$, and $p\colon C\slashedrightarrow D$, the laxity cells are associative in the sense that
$$\xymatrix{ \ar@{}[dr]|{Fu_m}\cdot \ar@{=}[d] \ar[r]^{Fm}|-@{|} & \ar@{}[drr]|{\phi_{p,n}} \cdot \ar@{=}[d] \ar[r]^{Fn}|-@{|} & \cdot \ar[r]^{Fp}|-@{|} & \cdot \ar@{=}[d] & & & & \ar@{}[drr]|{\phi_{n,m}}\cdot \ar@{=}[d] \ar[r]^{Fm}|-@{|} & \cdot \ar[r]^{Fn}|-@{|} & \cdot \ar@{}[dr]|{Fu_p} \ar@{=}[d]  \ar[r]^{Fp}|-@{|} & \cdot \ar@{=}[d] \\
\ar@{}[drrr]|{\phi_{p\otimes n, m}} \cdot \ar@{=}[d] \ar[r]|-@{|} & \cdot \ar[rr]|-@{|} &   & \cdot \ar@{=}[d] & & = & & \ar@{}[drrr]|{\phi_{p,n\otimes m}} \cdot \ar@{=}[d] \ar[rr]|-@{|} &   & \cdot \ar[r]|-@{|} & \cdot \ar@{=}[d]\\
\cdot \ar[rrr]_{F((p\otimes n)\otimes m)}|-@{|} &  &  & \cdot  & & & & \cdot \ar[rrr]_{F(p\otimes (n\otimes m))}|-@{|}&  &  & \cdot\\
}$$
are equal, modulo composition with the image under $F$ of the associativity iso cell 
\[(p\otimes n)\otimes m\cong p\otimes (n\otimes m)
\]
given with the structure of $\mathbb A$. 
\end{enumerate}
Notice that the structural isomorphism reduce to strict identities when $\mathbb A$ and $\mathbb B$ are strict double categories. A \textbf{pseudo double functor} is a lax double functor where the laxity cells are isomorphisms; a \textbf{strict double functor} is one whose laxity cells are identities.
\end{define}

\begin{remark}  Generally speaking, use lower-case Greek letters for the laxity cells corresponding to the Latin capital letter for the functor.  Thus, $F$ is a lax functor with laxity cells denoted by `$\phi$' with subscripts; `$\gamma$' is used for a lax functor $G$.
\end{remark}

\begin{example}[Cf. \S 1.2 of \cite{YonedaThyDblCatz}]   The usual object functor $\ob(-)\colon \cat\to\set$ extends to a lax double functor $\Ob(-)\colon \prof\to \spn$ in the following way.  On a category $\C$ take the object set $\C_0$ to be the image as usual; likewise take the object part $F_0\colon \C_0\to\D_0$ to be the image of a given functor $F\colon \C\to\D$.  On a profunctor $P\colon \C\slashedrightarrow \D$ -- that is, a functor $P\colon \C^{op}\times\D\to\set$ -- take the image to be the disjoint union
\[ \Ob(P):=\coprod_{C,D}P(C,D).
\]
The assignment on a given transformation of profunctors is induced by the universal property of the coproduct.  This is a bona fide lax functor
\end{example}

\begin{define}[Cf. \S 1.1.5 \cite{ModuleComposition}] \label{define: lax nat transf} Let $F, G\colon\mathbb A\rightrightarrows\mathbb B$ denote lax functors with laxity cells $\phi$ and $\gamma$.  A \textbf{natural transformation} $\tau\colon F \to G$ assigns to each object $A$ an arrow $\tau_A\colon FA\to GA$ and to each proarrow $m\colon A\slashedrightarrow B$ a cell
$$\xymatrix{ \ar @{} [dr] |{\tau_m} FA \ar[r]^{Fm}|-@{|} \ar[d]_{\tau_A} & FB \ar[d]^{\tau_B}  \\  GA \ar[r]_{Gm}|-@{|} & GB
}$$
in such a way that the following axioms are satisfied.
\begin{enumerate}
\item{}[Naturality] Given any arrow $f\colon A\to B$, the usual naturality square
$$\xymatrix{  FA \ar[r]^{Ff} \ar[d]_{\tau_A} & FB \ar[d]^{\tau_B}  \\  GA \ar[r]_{Gf} & GB
}$$
commutes; and given any cell $\alpha\colon m\Rightarrow n$, the corresponding naturality square commutes in the sense that the compositions 
$$\xymatrix{ \ar@{} [dr]|{F\alpha} \cdot \ar[d]_{} \ar[r]^{Fm}|-@{|} & \cdot \ar[d]^{} & & \ar@{} [dr]|{\tau_m} \cdot \ar@{=}[d] \ar[r]^{Fm}|-@{|} & \cdot \ar@{=}[d] \\
\ar@{} [dr]|{\tau_n} \cdot \ar@{=}[d] \ar[r]|-@{|} & \cdot \ar@{=}[d] & = & \ar@{}[dr]|{G\alpha} \cdot \ar[d]_{} \ar[r]|-@{|} & \cdot \ar[d]^{} \\
\cdot \ar[r]_{Gn}|-@{|} & \cdot & & \cdot \ar[r]_{Gn}|-@{|} & \cdot
}$$
are equal.
\item{}[Functoriality] For any object $A$, the compositions
$$\xymatrix{ \ar@{} [dr]|{u_{\tau_A}} \cdot \ar[d]_{} \ar[r]^{u_{FA}}|-@{|} & \cdot \ar[d]^{} & & \ar@{} [dr]|{\phi_A} \cdot \ar@{=}[d] \ar[r]^{u_{FA}}|-@{|} & \cdot \ar@{=}[d] \\
\ar@{} [dr]|{\gamma_A} \cdot \ar@{=}[d] \ar[r]|-@{|} & \cdot \ar@{=}[d] & = & \ar@{}[dr]|{\tau_{u_A}} \cdot \ar[d]_{} \ar[r]|-@{|} & \cdot \ar[d]^{} \\
\cdot \ar[r]_{Gu_A}|-@{|} & \cdot & & \cdot \ar[r]_{Gu_A}|-@{|} & \cdot
}$$
are equal; and for any composable proarrows $m\colon A\slashedrightarrow B$ and $n\colon B\slashedrightarrow C$, the composites 
$$\xymatrix{ \cdot\ar@{}[dr]|{\tau_m}\ar[d]_{\tau_A}\ar[r]^{Fm}|-@{|} &\cdot\ar@{}[dr]|{\tau_n}\ar[d] \ar[r]^{Fn}|-@{|} &\cdot\ar[d]^{\tau_C} && \cdot\ar@{}[drr]|{\phi_{n,m}} \ar@{=}[d] \ar[r]^{Fm}|-@{|} & \cdot \ar[r]^{Fn}|-@{|} & \cdot \ar@{=}[d] \\
\cdot\ar@{}[drr]|{\gamma_{n,m}}\ar@{=}[d]\ar[r]_{Gm}|-@{|}&\cdot\ar[r]_{Gn}|-@{|}&\cdot\ar@{=}[d]&=&\cdot\ar@{}[drr]|{\tau_{n\otimes m}}\ar[d]_{\tau_A} \ar[rr]|-@{|} && \cdot\ar[d]^{\tau_C}\\
\cdot\ar[rr]_{G(n\otimes m)}|-@{|} &&\cdot  &&\cdot\ar[rr]_{G(n\otimes m)}|-@{|}&&\cdot
}$$
are equal.
\end{enumerate}
Let $\lax(\mathbb A,\mathbb B)$ denote the category of lax functors $\mathbb A\to\mathbb B$ and their transformations. The composition is ``component-wise" and identity transformations are those with identity morphisms in each of their components.
\end{define}

In a few simple cases, this category can be described in terms of well-known structures. These equivalences are given by the ``elements" construction at the beginning of the next subsection.

\begin{example}  A lax functor $\mathbf 1\to\spn$ is essentially a small category.  In fact, taking elements induces an equivalence
\[ \mathbf{Lax}(\mathbf 1,\spn)\simeq \cat.
\]
In terms of monoids and their morphisms (i.e. a 1-category of monoids and their morphisms in a bicategory), this means that
\[  \mathbf{Lax}(\mathbf 1,\spn)\simeq \mathbf{Mon}(\Span(\set)).
\]
as 1-categories. This is a primordial example or special case of our main results, Theorem \ref{Main Theorem 1} and Theorem \ref{Main Theorem 2}.
\end{example}

\begin{example}[See \S 3.13 of \cite{YonedaThyDblCatz}]  If $\C$ is an ordinary category, let $\mathcal V\C$ denote the ``vertical double category" formed from the objects of $\C$ and using the morphisms of $\C$ as the proarrows with only identity arrows and cells.  There is then an equivalence
\[ \mathbf{Lax}(\mathcal V\C,\spn)\simeq \cat/\C
\]
of ordinary categories given by taking elements.
\end{example}

\subsection{Discrete Double Fibrations}

The fibration properties of the double category of elements construction lead to the axiomatization of our notion of ``discrete double fibration." Let us recall the details of this double category and its associated projection functor.

\begin{construction}[\S 3.7 \cite{YonedaThyDblCatz}] \label{double category of elements}  Let $F\colon \mathbb B^{op}\to\spn$ denote a lax functor with laxity cells $\phi$.  The \textbf{double category of elements} $\dblelt(F)$ has 
\begin{enumerate} 
\item as objects those pairs $(B,x)$ with $x\in FB$; 
\item as morphisms $f\colon (B,x)\to (C,y)$ those arrows $f\colon B\to C$ of $\mathbb B$ such that $f^*y=x$ holds under the action of the transition function $f^*\colon FC\to FB$;
\item as proarrows $(B,x)\slashedrightarrow (C,y)$ those pairs $(m,s)$ consisting of a proarrow $m\colon B\slashedrightarrow C$ and an element $s\in Fm$ such that $s_0=x$ and $s_1=y$ both hold; and 
\item as cells $(m,s)\Rightarrow (n,t)$ those cells $\theta$ of $\mathbb B$ as at left below for which the associated morphism of spans on the right
$$\xymatrix{
\ar@{}[dr]|{\theta}A \ar[d]_{f} \ar[r]^{m}|-@{|} & B \ar[d]^g & & FC \ar[d]_{f^*} & \ar[l]_{(-)_0} Fn \ar[d]^{\theta^*} \ar[r]^{(-)_1} & FD \ar[d]^{g^*} \\
C \ar[r]_n|-@{|} & D & & FA & \ar[l]^{(-)_0} Fm \ar[r]_{(-)_1} & FB
}$$
satisfies $\theta^*(t)=s$.  
\end{enumerate}
The 1-category structure on objects and morphisms is the same as the ordinary category of elements.  Internal composition of cells uses the internal composition in $\mathbb B$ and the strict equalities of the form $\theta^*(t)=s$ in the definition.  External composition uses the given laxity morphisms.  For example, given composable proarrows $(m,s)\colon (B,x)\slashedrightarrow (C,y)$ and $(n,t)\colon (C,y)\slashedrightarrow (D,z)$, the composite is defined as
\[ (n,t) \otimes (m,s):= (n\otimes m,\phi_{n,p}(s,t)).
\]
Of course this is a well-defined proarrow.  Identities and external composition of cells is similar.  This makes $\dblelt(F)$ a double category.  It is only as strict as $\mathbb B$.  There is a \textbf{projection double functor} $\Pi\colon \dblelt(F)\to\mathbb B$ taking the indexing objects, morphisms, proarrows and cells to $\mathbb B$.  This is strict even if $\mathbb B$ is pseudo.  \end{construction}

It is remarked in \cite{YonedaThyDblCatz} that taking elements extends to a functor. Here are the details.

\begin{construction}[Morphism from a Transformation of Lax Functors] Let $\tau\colon F\to G$ denote a transformation of lax functors $F,G\colon\mathbb B^{op}\rightrightarrows \spn$ as in Definition \ref{define: lax nat transf}. Define what will be a strict functor of double categories $\dblelt(\tau)\colon\dblelt(F)\to\dblelt(G)$ over $\mathbb B$. On objects and arrows, take
\[ (D,x) \mapsto (D, \tau_Dx) \qquad\qquad f\mapsto f
\]
The arrow assignment is well-defined because $f^*\tau_D = \tau_Cf^*$ holds by the strict naturality condition in the definition. This assignment is functorial by construction. Now, for proarrows, send
\[ (m,s)\mapsto (m,\tau_ms)\qquad\qquad\alpha\mapsto \alpha
\]
similarly to the object and arrow assignments. Well-definition again follows from naturality. Functoriality is by construction. The rest of the important content is in the following result.
\end{construction}

\begin{lemma} \label{lemma: elements is a functor} Given a transformation $\tau\colon F\to G$ of span-valued, lax functors, the assignments above yield a morphism from $\Pi\colon\dblelt(F)\to\mathbb B$ to $\Pi\colon\dblelt(G)\to\mathbb B$ over $\mathbb B$. This association is functorial, meaning that the elements functor
\[ \dblelt(-)\colon \lax(\mathbb B^{op},\spn) \longrightarrow \mathbf{Dbl}/\mathbb B
\]
valued in the slice of double categories over $\mathbb B$ is well-defined.
\end{lemma}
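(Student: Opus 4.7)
The lemma has two parts: that the described assignment is a strict double functor $\dblelt(\tau)\colon\dblelt(F)\to\dblelt(G)$ over $\mathbb{B}$, and that $\tau\mapsto\dblelt(\tau)$ preserves identities and composition. The plan is to unpack each double-functor axiom in turn and match it against the corresponding clause of Definition \ref{define: lax nat transf}; the construction has already fixed the assignment, so everything amounts to verifying the axioms.

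First I would check well-definition at each level of structure. Object images are immediate. For an arrow $f\colon(B,x)\to(C,y)$ of $\dblelt(F)$, so $f^*y=x$, the image lands in $\dblelt(G)$ because the arrow-naturality clause for $\tau$ gives $f^*(\tau_C y)=\tau_B(f^*y)=\tau_B x$. Proarrow images are tautological. For a cell $\theta\colon(m,s)\Rightarrow(n,t)$ with $\theta^*(t)=s$, the cell-naturality clause delivers $\theta^*(\tau_n t)=\tau_m s$, which is the condition needed for the image cell to live in $\dblelt(G)$. Next, internal functoriality on both the 0- and 1-parts is immediate, since $\dblelt(\tau)$ acts as the identity on $\mathbb{B}$-components and internal composition in $\dblelt(F)$ is inherited from $\mathbb{B}$. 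External unit preservation follows from the unit clause of the functoriality axiom for $\tau$, which supplies the strict equality $\tau_{u_B}\phi_B(x)=\gamma_B(\tau_B x)$. The one step that requires genuine calculation is strict preservation of external composition: for composable $(m,s)$ and $(n,t)$,
\[
\dblelt(\tau)\bigl((n,t)\otimes(m,s)\bigr)=\bigl(n\otimes m,\;\tau_{n\otimes m}\phi_{n,m}(s,t)\bigr),
\]
whereas
\[
\dblelt(\tau)(n,t)\otimes\dblelt(\tau)(m,s)=\bigl(n\otimes m,\;\gamma_{n,m}(\tau_m s,\tau_n t)\bigr),
\]
and these agree by the composition clause of the functoriality axiom for $\tau$. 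The analogous equality on externally composable cells follows from the same axioms, applied row by row.

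Finally, commutativity with the projections to $\mathbb{B}$ is built into the construction, since $\dblelt(\tau)$ is the identity on $\mathbb{B}$-components. Functoriality of $\dblelt(-)$ is then immediate from componentwise composition of transformations: an identity transformation yields the identity double functor, and for composable $\sigma,\tau$ the image of $(m,s)$ under $\dblelt(\sigma)\circ\dblelt(\tau)$ is $(m,\sigma_m\tau_m s)=(m,(\sigma\tau)_m s)$, which is the value of $\dblelt(\sigma\tau)$. The main ``obstacle'' is really just the external-composition check, which uses the functoriality axiom essentially; the rest is bookkeeping.
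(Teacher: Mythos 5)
Your proof is correct and follows the same route as the paper: the substantive checks are preservation of external units and composition, both of which reduce to the two clauses of the Functoriality axiom in Definition \ref{define: lax nat transf}, with the well-definition and internal-functoriality checks being routine. You simply spell out the relevant equalities (e.g.\ $\tau_{n\otimes m}\phi_{n,m}(s,t)=\gamma_{n,m}(\tau_m s,\tau_n t)$) that the paper's proof leaves implicit.
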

\begin{proof} It is left to see that $\dblelt(\tau)\colon \dblelt(F)\to\dblelt(G)$ preserves external composites and units. The definition of external composition in the elements construction incorporates the laxity morphisms coming with $F$ and $G$. So, the preservation of external composition reduces to the statement that these laxity morphisms interact in the proper way with the components of $\tau$ used in the definition. But this is precisely what the ``Functoriality" condition of Definition \ref{define: lax nat transf} axiomatizes. Unit preservation follows similarly.  \end{proof}

As above, the elements functor is valued in the slice of the category of double categories over $\mathbb B$. However, those double functors strictly in its image possess certain fibration properties leading to the definition of a ``discrete double fibration." First the properties:

\lemma  Let $F\colon \mathbb B^{op}\to\spn$ denote a lax double functor.  The projection functors $\Pi_0$ and $\Pi_1$ underlying the canonical projection $\Pi\colon \mathbb E\mathbf{lt}(F)\to \mathbb B$ are discrete fibrations.
\endlemma
\proof Here are the required lifts. Given $(D,x)$ and $f\colon C\to D$, the lift is
\[ f\colon (C,f^*x)\to (D,x)
\]
where $f^*\colon FD\to FC$ is the transition function. Similarly, given $(n,s)$ for a proarrow $n$ and $s\in Fn$ and a cell $\alpha\colon m\Rightarrow n$, the cartesian cell above $\alpha$ with codomain $(n,s)$ is
$$\xymatrix{
(A, f^*x)\ar@{}[dr]|{\alpha} \ar[d]_{f} \ar[r]^{(m,\alpha^*s)}|-@{|}  & (B,g^*y) \ar[d]^{g}\\
(C, x) \ar[r]_{(n, s)}|-@{|} & (D,y)
}$$
where $\alpha^*\colon Fn\to Fm$ is the transition function between the vertices of the spans. Notice that by the fact that $\alpha^*$ induces a morphism of spans, the source and target of $(m, \alpha^*s)$ are well-defined.  \endproof

Such functors are equivalently characterized with a distinctly double categorical flavor.

\begin{prop} \label{characterize disc doubl fibs} A double functor $P\colon \mathbb E\to\mathbb B$ between strict double categories for which $P_0$ and $P_1$ are discrete fibrations is equivalently a category object in $\dfib$, and thus equivalently a monoid in the bicategory of spans in $\dfib$.
\end{prop}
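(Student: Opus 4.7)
The plan is to unfold the definitions on both sides of each equivalence and match the data up by repackaging.

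For the first equivalence, recall that a strict double category $\mathbb E$ is by convention a category object in $\cat$, presented as $(\mathbb E_0, \mathbb E_1, \src, \tgt, u, \otimes)$, and similarly for $\mathbb B$. A strict double functor $P$ consists of a pair of functors $(P_0, P_1)$ making the four structural diagrams—for source, target, unit and external composition—strictly commute. When $P_0$ and $P_1$ are both discrete fibrations, each of these commuting diagrams is literally a morphism in $\dfib$, since a morphism there is exactly a commutative square of functors whose vertical arrows are discrete fibrations. The axioms of a double category hold componentwise in $\cat$, so they hold as equations of morphisms in $\dfib$. Thus the data $(P_0, P_1, \src, \tgt, u, \otimes)$ assembles into a category object in $\dfib$, and conversely any category object in $\dfib$ recovers a double functor with both components discrete fibrations by forgetting the slice structure.

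The one point requiring verification is that the composition morphism in a category object in $\dfib$ takes its source in a pullback $p_1 \times_{p_0} p_1$ formed in $\dfib$. Since discrete fibrations are pullback-stable and closed under composition in $\cat$, this pullback is computed componentwise: the pullbacks $\mathbb E_1 \times_{\mathbb E_0} \mathbb E_1$ and $\mathbb B_1 \times_{\mathbb B_0} \mathbb B_1$ in $\cat$, together with the induced comparison, which is again a discrete fibration. Thus pullbacks in $\dfib$ exist and agree with those in $\cat$, and the external composition of the ambient double categories provides exactly the required composition morphism in $\dfib$.

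The second equivalence is a direct instance of the general fact that for any category $\C$ with pullbacks there is an equivalence
\[ \cat(\C) \simeq \mon(\Span(\C)) \]
obtained by viewing the pair $(\src, \tgt)$ as a span and the unit and composition as monoid operations, with associativity encoded through the pullback. Since $\dfib$ has pullbacks by the preceding paragraph, this principle applies with $\C = \dfib$ to give the second claim. No serious obstacle is anticipated; the only substantive point is closure of $\dfib$ under the required pullbacks, which is standard.
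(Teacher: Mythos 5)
Your proof is correct and follows essentially the same route as the paper's: identify a category object in $\dfib$ (viewed inside the arrow category of $\cat$) with a double functor whose components are discrete fibrations, and then invoke the standard equivalence $\cat(\C)\simeq\mon(\Span(\C))$ with $\C=\dfib$. You are somewhat more explicit than the paper about the one substantive point---that the relevant pullbacks in $\dfib$ exist and are computed componentwise, the key reason being uniqueness of lifts through the apex $P_0$ of the cospan---which the paper simply asserts.
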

\begin{proof} In the first place $\dfib$ has strict pullbacks, so the statement itself makes sense. On the one hand, a category object in any subcategory of an arrow category closed under finite limits is an internal functor. Thus, a category in $\dfib$ is a double functor. Its object and arrow parts must be objects of $\dfib$, that is, discrete fibrations. On the other hand, a double functor is a category object in the arrow category of $\cat$. But if $P_0$ and $P_1$ are discrete fibrations, then such $P$ lives in $\cat(\dfib)$. A monoid in spans in categories is equivalently a double category; similarly a monoid in spans in the arrow category of $\cat$ is equivalently a double functor. Thus, the components of such a double functor are discrete fibrations if and only if the double functor is in fact a monoid in spans in $\dfib$.  \end{proof}

\begin{define}  A \textbf{discrete double fibration} is a category object in $\dfib$. A morphism of discrete double fibrations is a pair of double functors $(H,K)$ making a commutative square. Take $\cat(\dfib)$ to be the category of discrete double fibrations. Let $\dfib(\mathbb B)$ denote the subcategory of discrete double fibrations with fixed target $\mathbb B$ and morphisms with $K = 1_\mathbb B$. This is equivalently the fiber of the codomain projection $\cod\colon \cat(\dfib) \to \mathbf{Dbl}$ over $\mathbb B$.
\end{define}

\begin{remark} Although the official definition has a succinct and philosophically appropriate phrasing, as it is convenient, any of the equivalent characterizations of discrete double fibrations in Proposition \ref{characterize disc doubl fibs} may be used throughout.
\end{remark}

\begin{remark}  The name used is ``\emph{discrete} double fibration" because this is a discretization of a more general concept of ``double fibration." This will be a double functor $P\colon \mathbb E\to\mathbb B$ whose underlying functors $P_0$ and $P_1$ are fibrations but that satisfy some further compatibility conditions.
\end{remark}

\begin{remark} The results so far means that by fiat the elements functor is valued in discrete double fibrations
\[ \dblelt(-)\colon \lax(\mathbb B^{op},\spn) \longrightarrow \dfib(\mathbb B)
\]
over $\mathbb B$. The purpose of this section is now to exhibit the pseudo-inverse yielding an equivalence of categories. First end this subsection with some general results.
\end{remark}

\begin{example}[Domain Projection]  For any double category, the domain projection functor
\[ \dom\colon \mathbb B/X\longrightarrow \mathbb B
\]   
from the double slice $\mathbb B/X$ is a discrete double fibration.  This is the image under the elements functor $\dblelt(-)\colon \mathbf{Lax}(\mathbb B^{op},\spn)\to \dfib(\mathbb B)$ of the canonical representable functor on $X$.
\end{example}

\begin{prop} \label{pullback characterization} A double functor $P\colon \mathbb E\to\mathbb B$ is a discrete double fibration over a strict double category $\mathbb B$ if, and only if, the transpose square
$$\xymatrix{
\mathbb E^\dagger_1 \ar[d]_{P_1^\dagger}\ar[r]^{\cod} &
\mathbb E^\dagger_0 \ar[d]^{P_0^\dagger} \\
\mathbb B^\dagger_1 \ar[r]_{\cod}&
\mathbb B^\dagger_0
}$$
is a pullback in $\cat$.
\end{prop}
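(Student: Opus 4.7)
The plan is to reduce the statement to the classical characterization of ordinary discrete fibrations together with the fact that pullbacks in $\cat$ are detected levelwise. Specifically, a functor $F\colon\mathcal A\to\mathcal B$ is a discrete fibration if and only if the square
\[
\xymatrix{
\mathcal A_1 \ar[d]_{F_1}\ar[r]^{\cod} & \mathcal A_0 \ar[d]^{F_0} \\
\mathcal B_1 \ar[r]_{\cod} & \mathcal B_0
}
\]
of its arrow- and object-sets is a pullback in $\set$, this being a direct reformulation of the unique-lifting property. Secondly, a commutative square in $\cat$ is a pullback precisely when the two squares of sets obtained by extracting object-sets and morphism-sets are both pullbacks in $\set$, since pullbacks of categories are computed levelwise.

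With these two ingredients in hand, I would unpack the transpose. The category $\mathbb E^\dagger_0$ has the objects of $\mathbb E$ as objects and the proarrows of $\mathbb E$ as morphisms; the category $\mathbb E^\dagger_1$ has the arrows of $\mathbb E$ as objects and the cells of $\mathbb E$ as morphisms, composed via the external composition of $\mathbb E$. Under this identification, $\cod\colon\mathbb E^\dagger_1\to\mathbb E^\dagger_0$ sends an arrow $f$ of $\mathbb E$ to its codomain object $\cod f$, and a cell $\alpha$ to its bottom proarrow — that is, to its internal codomain in $\mathbb E_1$. The transposed components $P_0^\dagger$ and $P_1^\dagger$ act by $P_0$ on objects and arrows and by $P_1$ on proarrows and cells, respectively.

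Finally, applying the object-set functor to the transpose square of the proposition yields exactly the defining pullback square for $P_0\colon\mathbb E_0\to\mathbb B_0$ to be a discrete fibration, while applying the morphism-set functor yields the analogous square for $P_1\colon\mathbb E_1\to\mathbb B_1$. Combining these observations, the transpose square is a pullback in $\cat$ if and only if both $P_0$ and $P_1$ are discrete fibrations, which by Proposition \ref{characterize disc doubl fibs} is exactly the condition that $P$ be a discrete double fibration. The only real subtlety is the identification in the middle paragraph — verifying that $\cod$ in $\mathbb E^\dagger$ truly corresponds, on objects and on morphisms respectively, to the internal codomains of $\mathbb E_0$ and $\mathbb E_1$; once that bookkeeping is in place, the conclusion is automatic.
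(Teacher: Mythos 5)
Your argument is correct and is exactly the ``straightforward verification'' the paper leaves to the reader: reduce to the classical pullback characterization of discrete fibrations in $\set$, use that pullbacks in $\cat$ are detected on object-sets and morphism-sets separately, and observe that the object-level and morphism-level squares of the transpose square are precisely the codomain squares for $P_0$ and $P_1$. Your identification of $\cod\colon\mathbb E^\dagger_1\to\mathbb E^\dagger_0$ (codomain object on arrows, internal codomain proarrow on cells) is the right bookkeeping, and the appeal to Proposition \ref{characterize disc doubl fibs} closes the loop.
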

\begin{proof}  Straightforward verification.    \end{proof}

\begin{remark}  Lemma \ref{pullback characterization} is the analogue of the characterization of ordinary discrete fibrations, saying that a functor $F\colon \F\to\C$ is a discrete fibration if, and only if, the square 
$$\xymatrix{
\F_1 \ar[d]_{F_1} \ar[r]^{d_1} & \F_0 \ar[d]^{F_0} \\
\C_1 \ar[r]_{d_1} & \C_0
}$$
is a pullback in $\set$.  This characterization will be important in the monadicity developments in forthcoming work.
\end{remark}

\subsection{Pseudo-Inverse on Objects and Horizontal Morphisms}

The present goal is to construct a pseudo-inverse for the elements functor. This will be a functor from discrete double fibrations back to lax span-valued double functors. For a discrete double fibration $P\colon \mathbb E\to\mathbb B$, begin correspondences leading to a lax double functor $F_P\colon \mathbb B^{op}\to\spn$ in the following way.

\begin{construction}[Transition Morphisms] On objects, take $B\mapsto \mathbb E_B$, the set of objects of $\mathbb E_0$ over $B\in\mathbb B_0$ via $P_0$. Call this the ``fiber" over $B$. Since $P_0$ is a discrete fibration, for every horizontal morphism $f\colon B\to C$, there is a corresponding transition function $f^*\colon \mathbb E_C\to\mathbb E_B$ given on $x\in \mathbb E_C$ by taking the domain $f^*x$ of the unique horizontal morphism over $f$ with codomain $x$. Given a proarrow $m\colon A\slashedrightarrow B$, required is a span from $\mathbb E_A$ to $\mathbb E_B$.  For this, let $\mathbb E_m$ denote the fiber of $P_1$ over $m$, and send $m$ to the span
\[ \mathbb E_A \xleftarrow{\src} \mathbb E_m \xrightarrow{\tgt} \mathbb E_B.
\]  
Finally, associated to a cell $\alpha$ of $\mathbb B$ of the form
$$\xymatrix{
A \ar@{}[dr]|{\alpha} \ar[d]_{f} \ar[r]^{m}|-@{|} & B \ar[d]^{g} \\
C \ar[r]_{n}|-@{|} & D
}$$
will be a cell of $\spn$, that is, a morphism of the spans associated to $m$ and $n$.  In light of the definitions so far, required is a function $\mathbb E_n\to \mathbb E_m$ making the diagram
$$\xymatrix{ 
\mathbb E_C \ar[d] & \ar[l] \mathbb E_n \ar@{-->}[d]^{\alpha^*} \ar[r] & \mathbb E_D \ar[d] \\
\mathbb E_A & \ar[l] \mathbb E_m \ar[r] & \mathbb E_B
}$$
commute.  But such a function $\mathbb E_n\to \mathbb E_m$ is given by the fact that $P_1$ is a discrete fibration.  That is, given $u\in \mathbb E_n$, there is a unique cell of $\mathbb E$ with codomain $u$ over $\alpha$, which, by the fact that $P_0$ is a discrete fibration, must be of the form
$$\xymatrix{
f^*x \ar@{}[dr]|{\Downarrow} \ar[d] \ar[r]|-@{|} & g^*y \ar[d] \\
x \ar[r]|-@{|}_u & y
}$$
with source and target the unique lifts over $f$ and $g$ respectively. Thus, send $u$ to the vertical source of this lifted cell, which, by construction, is over $m$, hence well-defined.  The cell diagram in $\spn$ displayed above then commutes by construction of $\mathbb E_n\to \mathbb E_m$. \end{construction}

\begin{construction}[Laxity Cells I] Let $D$ denote an object of $\mathbb B$. The laxity cell for the corresponding unit proarrow is of the form
$$\xymatrix{
\mathbb E_D \ar@{=}[d] & \ar[l]_{1} \mathbb E_D \ar@{-->}[d] \ar[r]^{1} & \mathbb E_D \ar@{=}[d] \\
\mathbb E_D & \ar[l]^{\src} \mathbb E_{u_D} \ar[r]_{\tgt} & \mathbb E_D
}$$
The functor between vertices $\lambda_D\colon\mathbb E_D \to \mathbb E_{u_D}$ is given by $X \mapsto u_X$. Notice that the unit condition of Definition \ref{lax functor defn} is satisfied because the external composition for $\mathbb E$ is strict.
\end{construction}

\begin{construction}[Laxity Cells II] Let $m\colon A\slashedrightarrow B$ and $n\colon B\slashedrightarrow C$ denote proarrows of $\mathbb B$. Required for $F_P$ are laxity cells for such $m$ and $n$ of the form 
$$\xymatrix{ 
\ar @{} [drrrr] |{\phi_{n,m}} \mathbb E_A  \ar@{=}[d] & \ar[l]_{\src} \mathbb E_m\ar[r]^{\tgt} & \mathbb E_B  & \mathbb E_n \ar[l]_{\src} \ar[r]^{\tgt} & \mathbb E_C \ar@{=}[d]  \\  
\mathbb E_A  & & \ar[ll]^{\src} \mathbb E_{n\otimes m}\ar[rr]_{\tgt} & & \mathbb E_C
}$$
between spans. The composed span in the domain of the cell has vertex given by the pullback. The cell amounts to a functor between vertices respecting the source and target functors. This is given by external composition of proarrows and cells over $m$ and $n$ respectively:
$$\xymatrix{ \mathbb E_m\times_{\mathbb E_B}\ar[d]_{\phi_{n,m}}\mathbb E_n & &   u\colon x\slashedrightarrow y, v\colon y\slashedrightarrow z \ar@{|->}[d] \\
  \mathbb E_{n\otimes m} & & v\otimes u
}$$
This is strictly functorial and a well-defined span morphism by the assumptions on $P\colon\mathbb E\to\mathbb B$. The laxity coherence law follows from the fact that external composition for $\mathbb E$ is associative.
\end{construction}

\lemma \label{pseudo-inverse} Let $P\colon\mathbb E\to\mathbb B$ denote a double fibration. The assignments
\[ D\mapsto \mathbb E_D \qquad f\mapsto f^*\qquad m\mapsto \mathbb E_m\qquad \alpha\mapsto \alpha^*
\]
with unit and laxity cells as above define a lax functor $F_P\colon \mathbb B^{op}\to\spn$.
\endlemma
\proof What remains to check is the naturality conditions in Definition \ref{lax functor defn}. Take cells $\alpha\colon m\Rightarrow p$ and $\beta\colon n\Rightarrow q$ of $\mathbb B$. The first condition amounts to the commutativity of the square made by the functors between vertices:
$$\xymatrix{  
\mathbb E_p\times_{\mathbb E_N}\mathbb E_q \ar[r]^{\;\;\;\phi} \ar[d]_{\alpha^*\times\beta^*} & \mathbb E_{q\otimes p} \ar[d]^{(\beta\otimes \alpha)^*}  \\   
\mathbb E_m\times_{\mathbb E_B}\mathbb E_n \ar[r]_{\;\;\;\phi} & \mathbb E_{n\otimes m}
}$$ 
But this follows by uniqueness assumptions. Naturality for the unit cells follows similarly. \endproof

\begin{construction}[Pseudo-Inverse on Morphisms of Discrete Double Fibrations] \label{pseudo-inverse for morphisms} Start with a morphism $H\colon P\to B$ of discrete double fibrations $P\colon \mathbb E\to\mathbb B$ and $Q\colon \mathbb G\to\mathbb B$.  Define correspondences for what will be a transformation $F_H\colon F_P\to F_Q$ between lax double functors $F_P$ and $F_Q$ arising as in Lemma \ref{pseudo-inverse}.  On objects $B$, define $(F_H)_B$ to be the restriction of $H$ to the fibers $H_0\colon \mathbb E_B\to\mathbb G_B$.  This is well defined since $QH=P$ holds.  Similarly, for a proarrow $m\colon B\slashedrightarrow C$, take the component proarrow $(F_H)_v$ in $\spn$ to be the span
$$\xymatrix{
\mathbb E_B \ar[d]_{H_0} & \ar[l] \mathbb E_m \ar[d]^{H_1} \ar[r] & \mathbb E_C \ar[d]^{H_0} \\
\mathbb G_B & \ar[l] \mathbb G_m \ar[r] & \mathbb G_C 
}$$
Again this is well-defined since $QH=P$ holds.
\end{construction}

\begin{prop} \label{lemma:psd inv is a functor} The assignments in Construction \ref{pseudo-inverse for morphisms} make a transformation $F_H\colon F_P\to F_Q$ of lax functors as in Definition \ref{define: lax nat transf}. These assignments result in a functor
\[ F_{(-)}\colon\lax(\mathbb B^{op},\spn)\longrightarrow \dfib(\mathbb B).
\]
\end{prop}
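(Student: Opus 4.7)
The plan is to verify the two axiom groups of Definition \ref{define: lax nat transf} for the would-be transformation $F_H$, and then check that the assignment $P \mapsto F_P$, $H \mapsto F_H$ respects composition and identities of morphisms of discrete double fibrations over $\mathbb B$. Throughout, the key tool is that $H$ is such a morphism, so $Q_0 H_0 = P_0$ and $Q_1 H_1 = P_1$ hold, and moreover $H$, being a double functor, strictly preserves external composition and units.

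For the naturality clause of Definition \ref{define: lax nat transf}, consider first an arrow $f\colon B \to C$ of $\mathbb B$ and an element $x \in \mathbb E_C$. By construction $f^*x \in \mathbb E_B$ is the domain of the unique morphism of $\mathbb E$ over $f$ with codomain $x$. Applying $H_0$ sends this morphism to one in $\mathbb G$ over $f$ with codomain $H_0 x$, so by the discrete fibration property of $Q_0$, uniqueness forces $H_0(f^*x) = f^*(H_0 x)$. The corresponding condition for cells $\alpha\colon m \Rightarrow n$ reduces by an identical uniqueness argument in $\mathbb G_1$: applying $H_1$ to the unique cell of $\mathbb E$ above $\alpha$ with codomain $u \in \mathbb E_n$ produces a cell of $\mathbb G$ above $\alpha$ with codomain $H_1 u$, which must be the unique such lift, and reading off its proarrow source gives $H_1(\alpha^* u) = \alpha^*(H_1 u)$ as required.

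For the functoriality clause, I would use that $H$ strictly preserves external units and composition. The unit laxity $\lambda_D$ for $F_P$ sends $x \mapsto u_x$; applying $H_1$ then yields $H_1(u_x) = u_{H_0 x}$, which is precisely $\lambda_D$ for $F_Q$ applied to $H_0 x$. Similarly, the composition laxity $\phi_{n,m}$ for $F_P$ sends a composable pair $(u,v)$ in the fibers to $v \otimes u$, and the equality $H_1(v \otimes u) = H_1(v) \otimes H_1(u)$ makes the required square between pullbacks of fibers commute on the nose.

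Finally, that $F_{(-)}$ is a functor on the appropriate categories is essentially tautological once $F_H$ is known to be a transformation: on objects it is Lemma \ref{pseudo-inverse}, and on morphisms each component of $F_H$ is just a restriction of $H_0$ or $H_1$ to a fiber, so an identity morphism of $\dfib(\mathbb B)$ produces the identity transformation, and a composite $K \circ H$ over $\mathbb B$ produces the componentwise composite $F_K \circ F_H$. The main obstacle is the cell-level naturality, where one must chase the definition of $\alpha^*$ through the uniqueness of the two-dimensional lifts in Construction \ref{double category of elements}; every other step reduces to formal strict preservation properties of the double functor $H$.
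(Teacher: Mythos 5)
Your proposal is correct and follows essentially the same route as the paper's proof: arrow and cell naturality via uniqueness of lifts for the discrete fibrations $Q_0$ and $Q_1$ combined with $QH=P$, and the unit/composition compatibility from $H$ being a strict double functor. You merely spell out the uniqueness arguments and the (trivial) functoriality of $F_{(-)}$ that the paper leaves implicit.
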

\begin{proof}  The first naturality condition holds because $H$ commutes with $P$ and $Q$ and because $Q_0$ is a discrete fibration; the second naturality condition holds again because $QH=P$ is true and because $Q_1$ is also a discrete fibration.  Proarrow functoriality follows from the fact that $H$ is a strict double functor.  \end{proof}

\subsection{An Equivalence of 1-Categories}

The pseudo-inverse construction of the last subsection in fact induces an equivalence of categories, leading to the first representation theorem.

\begin{construction} \label{equiv iso 1} To define a natural isomorphism $\eta\colon 1\cong F_{\dblelt(-)}$ between functors, needed are component transformations of lax functors indexed by lax $H\colon \mathbb B^{op}\to\spn$. For such a lax functor $H$, the required transformation of lax functors $\eta_H\colon H\to F_{\dblelt(H)}$ is given by 
\[ \eta_{H,A}\colon HA\longrightarrow \dblelt(H)_A\qquad X\mapsto (A,x)
\]
on objects $A\in |\mathbb B_0|$ and by
\[ \eta_{H,m}\colon Hm\longrightarrow \dblelt(F)_m\qquad s\mapsto (m,s)
\]
for a proarrow $m\colon B\slashedrightarrow C$, giving a morphism of spans
$$\xymatrix{
HB \ar[d]_{\eta} & Hm \ar[l] \ar[d]^{\eta} \ar[r] & HC \ar[d]^{\eta} \\
\dblelt(H)_B & \ar[l] \dblelt(H)_m \ar[r] & \dblelt(H)_C.
}$$
Since the maps $\eta_{H,A}$ and $\eta_{H,v}$ just add in indices, they are both bijections of sets.  Moreover, these components result in a transformation of lax functors as in Definition \ref{define: lax nat transf} by construction.  Thus, such $\eta_H$ is the vertical component of a supposed transformation of functors of virtual double categories. Naturality in $H$ is proved in the theorem below.
\end{construction}

\begin{construction} \label{equiv iso 2} On the other hand, a natural isomorphism $\epsilon\colon\elt(F_{(-)})\cong 1$ is given by components $\epsilon_P\colon \dblelt(F_P) \longrightarrow \mathbb E$ where $P\colon\mathbb E\to\mathbb B$ is a discrete double fibration. On objects and arrows take
\[ (D,x)\mapsto x\qquad (C,x)\xrightarrow{f} (D,y) \mapsto f^*x\xrightarrow{!} y
\]
where $f^*x\to y$ is the unique arrow above $f$ with codomain $y$. It is well-defined because $f^*y=x$ holds. These are bijections by uniqueness of these lifts; they are functorial and respect the fibering over $\mathbb B$. On proarrows and cells, take
\[(m,s)\mapsto s \qquad \alpha\;\mapsto\; \alpha^*s\Rightarrow s
\]
where $\alpha^*s\Rightarrow s$ is the unique lift in $\mathbb E$ of $\alpha$ with codomain $s$. Again these are bijections, functorial and fiber-respecting. Naturality in $P$ is proved in the following result. \end{construction}

\begin{theo} \label{Main Theorem 1} For any strict double category $\mathbb B$, there is an equivalence of categories
\[ \dfib(\mathbb B) \simeq \lax(\mathbb B^{op},\spn)
\]
induced by the double category of elements construction.
\end{theo}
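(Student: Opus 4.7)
The plan is to assemble the theorem from the pieces already in place: the functor $\dblelt(-)\colon\lax(\mathbb B^{op},\spn)\to\dfib(\mathbb B)$ from Lemma \ref{lemma: elements is a functor}, the pseudo-inverse $F_{(-)}\colon\dfib(\mathbb B)\to\lax(\mathbb B^{op},\spn)$ from Proposition \ref{lemma:psd inv is a functor}, and the component isomorphisms $\eta_H$ and $\epsilon_P$ from Constructions \ref{equiv iso 1} and \ref{equiv iso 2}. The only remaining substantive content is to check that these components are natural in $H$ and $P$ respectively, so that they assemble into natural isomorphisms $\eta\colon 1\Rightarrow F_{\dblelt(-)}$ and $\epsilon\colon\dblelt(F_{(-)})\Rightarrow 1$.

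First I would verify that each $\eta_H$ and $\epsilon_P$ is a componentwise isomorphism. This is essentially immediate from the form of the assignments: the maps $\eta_{H,A}\colon HA\to \dblelt(H)_A$ and $\eta_{H,m}\colon Hm\to \dblelt(H)_m$ are bijections because they merely pair an element with its (already-fixed) index, and the maps $\epsilon_P$ on objects, arrows, proarrows, and cells are bijections by the uniqueness clause in the definition of a discrete fibration (both $P_0$ and $P_1$). Since these bijections are the components of morphisms in $\lax(\mathbb B^{op},\spn)$ and $\dfib(\mathbb B)$ respectively, they are isomorphisms in the appropriate categories.

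For naturality of $\eta$, I would take a transformation $\sigma\colon H\to K$ of lax span-valued functors and chase an element $x\in HA$ or $s\in Hm$ around the square
\[ \xymatrix{ H \ar[r]^{\eta_H} \ar[d]_{\sigma} & F_{\dblelt(H)} \ar[d]^{F_{\dblelt(\sigma)}} \\ K \ar[r]_{\eta_K} & F_{\dblelt(K)}. } \]
Both composites send $x\mapsto (A,\sigma_A x)$ and $s\mapsto (m,\sigma_m s)$, so the square commutes on the nose, using the definitions of $\dblelt(\sigma)$ from Lemma \ref{lemma: elements is a functor} and of $F_{(-)}$ on morphisms from Construction \ref{pseudo-inverse for morphisms}. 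For naturality of $\epsilon$, I would take a morphism $H\colon P\to Q$ in $\dfib(\mathbb B)$ and check that the square $\epsilon_Q\circ\dblelt(F_H) = H\circ\epsilon_P$ commutes on objects, arrows, proarrows and cells. This reduces to the uniqueness of cartesian lifts: both composites take a cell $\alpha$ over a cell of $\mathbb B$ to the unique lift in $\mathbb G$ above that cell with the prescribed codomain, so they must agree. The object, arrow and proarrow cases are identical in spirit.

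I do not expect a hard step here: because Par\'e's elements construction and the fiber construction $F_P$ are strict inverses modulo reindexing, the naturality diagrams commute literally, not merely up to isomorphism. The only place subtlety could arise is in checking that $\eta_H$ respects the laxity cells on both sides, but this has already been absorbed into Construction \ref{equiv iso 1}. Thus assembling the two families into natural isomorphisms and invoking the standard definition of an equivalence of categories completes the proof.
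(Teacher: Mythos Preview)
Your proposal is correct and follows essentially the same approach as the paper's proof: both reduce the theorem to checking naturality of the component isomorphisms $\eta$ and $\epsilon$ already built in Constructions \ref{equiv iso 1} and \ref{equiv iso 2}, and both verify naturality by chasing elements through the relevant squares and invoking uniqueness of lifts in the discrete fibrations $Q_0$ and $Q_1$. The paper's proof is slightly terser on the isomorphism check (having absorbed that into the Constructions) but is otherwise the same argument.
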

\begin{proof} It remains to check the naturality of the isomorphisms in Constructions \ref{equiv iso 1} and \ref{equiv iso 2}. If $H$ and $G$ are lax functors with a transformation $\tau\colon H\to G$, the diagram 
$$\xymatrix{ 
H \ar[d]_\tau \ar[r]^{\eta} & F_{\dblelt(H)}\ar[d] \\
G \ar[r]_{\eta} & F_{\dblelt(G)}
}$$
commutes because indexing commutes with applying the components of $\tau$. Naturality in $P$ also follows. Let $H\colon P\to Q$ be a morphism of double fibrations $P\colon\mathbb E\to\mathbb B$ and $Q\colon\mathbb G\to\mathbb B$. For naturality, it is required that the square
$$\xymatrix{
\dblelt(F_P) \ar[d]_{\dblelt(F_H)} \ar[r]^{\;\;\epsilon} & \mathbb E \ar[d]^{H} \\
\dblelt(F_Q) \ar[r]_{\;\;\epsilon} & \mathbb G
}$$
commutes. Chasing an object $(D,X)$ or a proarrow $(m,s)$ around each side of the square, the result either way is $HX$ in the former case and is $Hs$ in the latter case. Thus, to check are the arrows and cells. Given an arrow $f\colon (C,X)\to (D,Y)$, the counter-clockwise direction gives the left arrow below whereas the clockwise direction gives the one on the right:
\[ !\colon f^*Hy \to y\qquad\qquad  H(!)\colon Hf^*y\to Hy
\]
These are strictly equal, however, by uniqueness of lifts. A formally similar argument works to show that the square commutes also at the level of cells. \end{proof}

\section{Monoids, Modules and Multimodulations}

The main representation theorem of the paper asserts an equivalence of certain virtual double categories.  To define these first recall the definition, presented here in the form of \cite{FrameworkGenMulticatz}.  Virtual double categories have been known under the name ``$\mathbf{f.c.}$-multicategories," that is, as an example of a ``generalized multicategory," in this case relative to the free-category monad in \cite{Operads}.

\subsection{Virtual Double Categories}

\begin{define}[See \S 2.1 of \cite{FrameworkGenMulticatz}] \label{virt dbl cat defn}  A \textbf{virtual double category} $\mathbb D$ consists of an underlying category $\mathbb D_0$, giving the objects and morphisms of $\mathbb D$, together with, for any two objects $C$ and $D$, a class of proarrows $v\colon C\slashedrightarrow D$ and for any ``arity" $k$ multicells of the form
$$\xymatrix{
A_0 \ar@{}[drrr]|{\mu} \ar[d]_f \ar[r]^{m_1}|-@{|} & A_1 \ar[r]^{m_2}|-@{|} & \cdots \ar[r]^{m_k}|-@{|} & A_k \ar[d]^g \\
B_0 \ar[rrr]_{n}|-@{|} & & & B_1
}$$
all subject to the unit, composition and associativity axioms, as detailed in the reference. The list of proarrows $(m_1, m_2, \dots, m_k)$ is a ``$k$-ary multisource." The definition allows nullary multisources with $k=0$. A \textbf{functor} of virtual double categories $F\colon\mathbb C\to \mathbb D$ sends objects to objects, arrows to arrows, proarrows to proarrows and multicells to multicells in such a way as to preserve domains, codomains, multisources (so arities in particular), targets, identities and compositions.
\end{define}

\begin{example} Every double category is a virtual double category by forgetting external composition. In particular, for $\C$ with pullbacks $\Span(\C)$ is a virtual double category.
\end{example}

\begin{define}[Cf. \S 5.1 \cite{FrameworkGenMulticatz}] A \textbf{unit proarrow} for an object $D\in\mathbb D$ in a virtual double category is a proarrow $u_D\colon D\slashedrightarrow D$ and a nullary opcartesian cell
$$\xymatrix{
\ar@{}[dr]|{\Downarrow} D \ar@{=}[r] \ar@{=}[d]& D \ar@{=}[d] \\
D \ar[r]_{u_D}|-@{|} & D.
}$$
A virtual double category \textbf{has units} if it is equipped with a choice of unit for each object.
\end{define}

\begin{remark} The unit proarrow of any honest double category is a unit in this sense. Existence of units is part of the requirements for a \emph{virtual equipment} as described in \cite{FrameworkGenMulticatz}. The full structure will not be needed here. All the interesting examples possess all units, so without further ado all virtual double categories will be assumed to have units. Functors of virtual double categories will be assumed to be \textbf{normal} in the sense that they preserve nullary opcartesian cells.
\end{remark}

\begin{remark} The universal property of any unit $u_D$ on a given object $D$ implies that $\mathbb D$ possesses generic multicells 
$$\xymatrix{ 
\ar@{}[drrr]|{\Downarrow}\cdot \ar[r]^{u_D}|-@{|} \ar@{=}[d] & \cdot \ar[r]^{u_D}|-@{|} & \cdots\ar[r]^{u_D}|-@{|} & \cdot \ar@{=}[d] \\
\cdot\ar[rrr]_{u_D}|-@{|} & & & \cdot
}$$
of any arity $k$. These are given from the unique factorization. Moreover, by uniqueness of these factorizations, a correct multicomposite of any such cells gives the generic multicell of the proper arity determined in this way.  \end{remark}

\begin{example}[Terminal Object] \label{terminal object in virt dbl cats} The terminal object $\mathbf 1$ in $\vdbl$ is peculiar and is needed later. It has a single object $\bullet$, an identity arrow $1_\bullet$ and an ``identity proarrow" $u_\bullet$. Whereas one might expect there to be only a single multicell $u_\bullet \Rightarrow u_\bullet$, in fact required are generic multicells with multisources of all arities $k$
$$\xymatrix{
\bullet \ar@{}[drrr]|{\mu_k} \ar[d]_{1_\bullet} \ar[r]^{u_\bullet}|-@{|} & \bullet \ar[r]^{u_\bullet}|-@{|} & \cdots \ar[r]^{u_\bullet}|-@{|} & \bullet \ar[d]^{1_\bullet}\\
\bullet \ar[rrr]_{u_\bullet}|-@{|} & & & \bullet
}$$
as otherwise the natural definitions on objects, arrows and proarrows will not extend to a unique functor of virtual double categories $\mathbb D\to\mathbf 1$. Multi-composition is defined to give the generic multicell with the appropriate arity. Notice, then, that in particular $\mathbf 1$ has units by fiat.
\end{example}

\begin{example} A \textbf{point} of a virtual double category is a (normalized) functor $D\colon\mathbf 1\to\mathbb D$. A point thus consists of an object $D$, its identity arrow $1_D$, its unit proarrow $u_D$ and the corresponding generic multicells.
\end{example}

A notion of transformation gives the 2-categorical structure on virtual double categories.

\begin{define} \label{def: transf of funct of virt dble cats} Let $F,G\colon\mathbb C\rightrightarrows \mathbb D$ denote functors of virtual double categories. A \textbf{transformation} $\tau\colon F\to G$ assigns to each object $C$ of $\mathbb C$ an arrow $\tau_C\colon FC\to GC$ and to each proarrow $m\colon C\slashedrightarrow D$ a cell 
$$\xymatrix{
FC \ar@{}[dr]|{\tau_m} \ar[d]_{\tau_C} \ar[r]^{Fm}|-@{|} & FD \ar[d]^{\theta_D} \\
GC \ar[r]_{Gm}|-@{|} & GD
}$$
in such a way that
\begin{enumerate}
\item{}[Arrow Naturality] for each arrow $f\colon C\to D$, the square
$$\xymatrix{
FC \ar[d]_{\tau_C} \ar[r]^{Ff} & FD \ar[d]^{\theta_D} \\
GC \ar[r]_{Gf} & GD
}$$
commutes; and
\item{}[Cell Naturality] for each multicell
$$\xymatrix{
\cdot \ar@{}[drrr]|{\mu} \ar[d]_f \ar[r]^{m_1}|-@{|} & \cdot \ar[r]^{m_2}|-@{|} & \cdots \ar[r]^{m_k}|-@{|} & \cdot\ar[d]^g\\
\cdot \ar[rrr]_n|-@{|} & & & \cdot 
}$$
the composed multicells on either side of 
$$\xymatrix{
\cdot \ar@{}[dr]|{\tau_{m_1}} \ar[d] \ar[r]^{Fm_1}|-@{|} & \cdot \ar@{}[dr]|{\tau_{m_2}} \ar[d] \ar[r]^{Fm_2}|-@{|} & \cdots\ar@{}[dr]|{\tau_{m_k}} \ar[r]^{Fm_k}|-@{|} & \cdot \ar[d] & & \cdot \ar@{}[drrr]|{F\mu} \ar[d] \ar[r]^{Fm_1}|-@{|} & \cdot \ar[r]^{Fm_2}|-@{|} & \cdots \ar[r]^{Fm_k}|-@{|} & \cdot \ar[d] \\
\cdot \ar@{}[drrr]|{G\mu} \ar[d] \ar[r]_{Gm_1}|-@{|} & \cdot \ar[r]_{Gm_2}|-@{|} & \cdots \ar[r]_{Gm_k}|-@{|} & \cdot \ar[d] & = & \cdot \ar@{}[drrr]|{\tau_n} \ar[d] \ar[rrr]_{Fn}|-@{|} & & & \cdot \ar[d] \\
\cdot \ar[rrr]_{Gn}|-@{|} & & & \cdot & & \cdot \ar[rrr]_{Gn}|-@{|} & & & \cdot
}$$
are equal.
\end{enumerate}
Denote the 2-category of virtual double categories, their functors and transformations by $\vdbl$.
\end{define}

\begin{prop} \label{pullbacks} $\vdbl$ has (strict 2-)pullbacks.
\end{prop}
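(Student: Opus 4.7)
The plan is to construct the pullback componentwise at each level of structure. Given a cospan $\mathbb A \xrightarrow{F} \mathbb C \xleftarrow{G} \mathbb B$ in $\vdbl$, define a virtual double category $\mathbb P$ whose underlying category $\mathbb P_0$ is the ordinary pullback $\mathbb A_0 \times_{\mathbb C_0} \mathbb B_0$ in $\cat$; whose proarrows $(A,B)\slashedrightarrow (A',B')$ are pairs $(m,n)$ of proarrows $m\colon A\slashedrightarrow A'$ in $\mathbb A$ and $n\colon B\slashedrightarrow B'$ in $\mathbb B$ satisfying $Fm = Gn$; and whose multicells with multisource $((m_1,n_1),\ldots,(m_k,n_k))$ and target $(m,n)$ are pairs $(\mu,\nu)$ of multicells having matching vertical arrows in $\mathbb P_0$ and satisfying $F\mu = G\nu$. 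Identities, multi-composition, and the required coherence axioms are inherited componentwise from $\mathbb A$ and $\mathbb B$.

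The one substantive structural point is the existence of units in $\mathbb P$. This is where the standing normality convention on functors pays off: for any object $(A,B)$ of $\mathbb P$, normality of $F$ and $G$ yields $Fu_A = u_{FA} = u_{GB} = Gu_B$, so $(u_A,u_B)$ is a well-defined proarrow, and the pair of chosen unit opcartesian cells gives a nullary cell which inherits the opcartesian property componentwise. The canonical projections $\pi_{\mathbb A}\colon\mathbb P\to\mathbb A$ and $\pi_{\mathbb B}\colon\mathbb P\to\mathbb B$ are then functors of virtual double categories by construction and preserve these chosen units strictly, hence are normal.

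For the 1-dimensional universal property, a cone $(H\colon\mathbb Q\to\mathbb A,\; K\colon\mathbb Q\to\mathbb B)$ satisfying $FH = GK$ forces the mediating functor $\mathbb Q\to\mathbb P$ at every level: an object $Q$ goes to $(HQ,KQ)$, a proarrow $m$ to $(Hm,Km)$, and a multicell $\mu$ to $(H\mu,K\mu)$, each of which is a valid pair precisely because $FH = GK$. Uniqueness, functoriality, and normality of the mediating functor are then immediate. The 2-dimensional universal property is the same story one level up: a compatible pair of transformations $\sigma\colon H_1\to H_2$ and $\tau\colon K_1\to K_2$ with $F\sigma = G\tau$ induces a unique transformation with object- and proarrow-components the evident pairs, with arrow- and cell-naturality inherited componentwise from $\sigma$ and $\tau$.

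The work is largely bookkeeping, with the single genuinely delicate point being the well-definedness of units in $\mathbb P$. This is exactly what the normality convention from the preceding remark is engineered to guarantee; once units are in hand, every remaining axiom and every level of the universal property reduces to the corresponding property of $\mathbb A$, $\mathbb B$, or the underlying pullback in $\cat$.
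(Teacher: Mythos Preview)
Your proof is correct and follows essentially the same componentwise construction as the paper's: define the pullback levelwise on the underlying category, on proarrows, and on multicells, with composition inherited from $\mathbb A$ and $\mathbb B$. You are in fact more thorough than the paper, which leaves implicit both the existence of units in the pullback and the details of the 2-dimensional universal property.
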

\begin{proof} Given two functors $F\colon\mathbb A\to\mathbb C$ and $G\colon\mathbb B\to\mathbb C$, the pullback has as its underlying category 
\[ (\mathbb A\times_{\mathbb C}\mathbb B)_0 = \mathbb A_0\times_{\mathbb B_0}\mathbb C_0.
\]
Proarrows are pairs $(m,n)$ for a proarrow $m$ of $\mathbb A$ and one $n$ of $\mathbb B$ satisfying $Fm=Gn$. Similarly, multicells are pairs $(\mu,\nu)$ with $\mu$ in $\mathbb A$ and $\nu$ in $\mathbb B$ such that $F\mu=G\nu$. Composition uses composition in $\mathbb A$ and $\mathbb B$. This is a virtual double category fitting into a commutative square
$$\xymatrix{
 \mathbb A\times_{\mathbb C}\mathbb B \ar[d]_{d_0} \ar[r]^{\;\;\;\;d_1} & \mathbb B \ar[d]^G \\ 
\mathbb A \ar[r]_F & \mathbb C
}$$
with the expected 2-categorical universal property \cite{StreetLimitsIndexed}..  \end{proof}

 Further limits of a 2-categorical variety abound in $\vdbl$. The comma category is of interest. These are well-known in $\cat$ (e.g. \S I.6 \cite{MacLane}). The formal abstraction and and elementary phrasing of its universal property in an arbitrary 2-category appear in \S 1 of \cite{StreetFibrations}.

\begin{prop} The 2-category $\vdbl$ has comma objects.
\end{prop}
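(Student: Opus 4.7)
The plan is to construct the comma object $F/G$ explicitly, generalizing the classical construction in $\cat$. Given functors $F\colon\mathbb A\to\mathbb C$ and $G\colon\mathbb B\to\mathbb C$, I would define $F/G$ to have as objects triples $(A, B, f)$ with $A$ an object of $\mathbb A$, $B$ an object of $\mathbb B$, and $f\colon FA\to GB$ an arrow of $\mathbb C$; as arrows compatible pairs $(a,b)$ as in the ordinary comma category; as proarrows $(A,B,f)\slashedrightarrow (A',B',f')$ triples $(m,n,\alpha)$ with $m$ a proarrow in $\mathbb A$, $n$ a proarrow in $\mathbb B$, and $\alpha$ a cell of $\mathbb C$ of the shape
$$\xymatrix{
FA\ar@{}[dr]|{\alpha} \ar[d]_f \ar[r]^{Fm}|-@{|} & FA'\ar[d]^{f'} \\
GB \ar[r]_{Gn}|-@{|} & GB'
}$$
and as multicells pairs $(\mu,\nu)$ of multicells of $\mathbb A$ and $\mathbb B$ whose images $F\mu$ and $G\nu$ satisfy a cylinder equation built from the chosen $\alpha$'s at the source and target proarrows. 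Multi-composition, identities and units in $F/G$ are inherited coordinate-wise from $\mathbb A$, $\mathbb B$ and $\mathbb C$, using that units are preserved by the normal functors $F$ and $G$.

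With the construction in hand, the projections $P\colon F/G\to\mathbb A$ and $Q\colon F/G\to\mathbb B$ that discard all but the $\mathbb A$- and $\mathbb B$-parts are evident functors of virtual double categories, and there is a canonical transformation $\kappa\colon FP\Rightarrow GQ$ whose object component at $(A,B,f)$ is $f$ and whose proarrow component at $(m,n,\alpha)$ is $\alpha$. Its cell-naturality against multicells of $F/G$ is precisely the cylinder equation encoded into the definition. For the universal property, given a virtual double category $\mathbb D$ with functors $H\colon\mathbb D\to\mathbb A$ and $K\colon\mathbb D\to\mathbb B$ together with a transformation $\theta\colon FH\Rightarrow GK$, I would define the mediating functor $\mathbb D\to F/G$ by $D\mapsto (HD,KD,\theta_D)$ on objects, $m\mapsto (Hm,Km,\theta_m)$ on proarrows, and analogously on arrows and multicells. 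Well-definedness on multicells is exactly the cell-naturality clause for $\theta$ in Definition \ref{def: transf of funct of virt dble cats}; uniqueness is forced because the requirement that the projections recover $H$ and $K$, and that the induced transformation match $\theta$, determines every coordinate of the image. The 2-dimensional universal property — factoring a transformation between mediating functors through a transformation compatible with $\kappa$ — is checked by an identical kind of bookkeeping.

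The main obstacle is pinning down the cylinder equation correctly. In a virtual double category one cannot freely horizontally compose the cells $\alpha_i$ attached to a multisource $(m_1,n_1,\alpha_1),\dots,(m_k,n_k,\alpha_k)$ into a single cell of $\mathbb C$, so the equation must be phrased via multi-composition: one side substitutes the cells $\alpha_i$ into the appropriate input slots of $G\nu$, while the other substitutes the single cell $\alpha$ into the unary output of $F\mu$. Both constructions produce a multicell in $\mathbb C$ with multisource $(Fm_1,\dots,Fm_k)$ and target $Gn$ and matching external arrows, and the condition is that these two multicells agree. Once this is phrased correctly, all remaining verifications — functoriality and normality of $P$, $Q$ and the mediating functor, the transformation axioms for $\kappa$, and the two-dimensional universality — are formal consequences of the axioms recalled in Section 3 and Definition \ref{def: transf of funct of virt dble cats}.
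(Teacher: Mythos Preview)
Your proposal is correct and follows essentially the same route as the paper: the paper constructs $F/G$ with exactly the objects, arrows, proarrows and multicells you describe (up to the cosmetic ordering of the tuples), and phrases the cylinder equation for multicells precisely as you do, by composing the $\alpha_i$ with $G\nu$ on one side and the target $\alpha$ with $F\mu$ on the other. If anything, you are more explicit than the paper about the universal property and about why the cylinder equation must be expressed via multi-composition rather than a horizontal composite of the $\alpha_i$.
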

\begin{proof} Given functors $F\colon\mathbb A\to\mathbb C$ and $G\colon\mathbb B\to\mathbb C$, the (purported) comma $F/G$ has its underlying 1-category as $(F/G)_0 = F_0/G_0$. Proarrows are triples $(m, \alpha, n)$ with $m$ and $n$ proarrows of $\mathbb A$ and $\mathbb B$, respectively, and $\alpha$ a cell $\alpha\colon Fm \Rightarrow Gn$ of $\mathbb C$. A multicell is thus a pair of multicells
$$\xymatrix{
\cdot \ar@{}[drrr]|{\mu} \ar[d] \ar[r]^{m_1}|-@{|} & \cdot \ar[r]^{m_2}|-@{|} & \cdots \ar[r]^{m_k}|-@{|} & \cdot\ar[d] && \cdot \ar@{}[drrr]|{\nu} \ar[d] \ar[r]^{n_1}|-@{|} & \cdot \ar[r]^{n_2}|-@{|} & \cdots \ar[r]^{n_k}|-@{|} & \cdot\ar[d]\\
\cdot \ar[rrr]_p|-@{|} & & & \cdot && \cdot \ar[rrr]_q|-@{|} & & & \cdot 
}$$
from $\mathbb A$ and $\mathbb B$, respectively, of the same arity and satisfying the equation
$$\xymatrix{
\cdot \ar@{}[dr]|{\alpha_1} \ar[d] \ar[r]^{Fm_1}|-@{|} & \cdot \ar@{}[dr]|{\alpha_2} \ar[d] \ar[r]^{Fm_2}|-@{|} & \cdots\ar@{}[dr]|{\alpha_k} \ar[r]^{Fm_k}|-@{|} & \cdot \ar[d] & & \cdot \ar@{}[drrr]|{F\mu} \ar[d] \ar[r]^{Fm_1}|-@{|} & \cdot \ar[r]^{Fm_2}|-@{|} & \cdots \ar[r]^{Fm_k}|-@{|} & \cdot \ar[d] \\
\cdot \ar@{}[drrr]|{G\nu} \ar[d] \ar[r]_{Gn_1}|-@{|} & \cdot \ar[r]_{Gn_2}|-@{|} & \cdots \ar[r]_{Gn_k}|-@{|} & \cdot \ar[d] & = & \cdot \ar@{}[drrr]|{\beta} \ar[d] \ar[rrr]_{Fp}|-@{|} & & & \cdot \ar[d] \\
\cdot \ar[rrr]_{Gq}|-@{|} & & & \cdot & & \cdot \ar[rrr]_{Gq}|-@{|} & & & \cdot
}$$
Composition is given by that in $\mathbb A$ and $\mathbb B$. So defined, $F/G$ comes with evident projection functors to $\mathbb A$ and $\mathbb B$. The expected transformation 
$$\xymatrix{
\ar@{}[dr]|{\Rightarrow} F/G\ar[d]_{d_0} \ar[r]^{d_1} & \mathbb B \ar[d]^G \\ 
\mathbb A \ar[r]_F & \mathbb C
}$$
has components $f$ for objects $(A, f, B)$ and $\alpha$ for proarrows $(m,\alpha, n)$. It satisfies the required 2-categorical universal property in the reference by construction.
\end{proof}

\begin{remark} This makes $\vdbl$ into a ``representable 2-category" \cite{GrayFormalCats} and \cite{StreetFibrations}.
\end{remark}

\begin{example}[Slice Virtual Double Category] \label{slice} Let $D$ denote an object in a virtual double category $\mathbb D$. The \textbf{slice} virtual double category over $D$ is defined as the comma $1/D$
$$\xymatrix{
\ar@{}[dr]|{\Rightarrow} 1/D\ar[d] \ar[r]^{d_1} & \mathbf 1 \ar[d]^D \\ 
\mathbb D \ar[r]_1 & \mathbb D
}$$
Denote this as usual by $\mathbb D/D$. The coslice is defined analogously.
\end{example}

\subsection{Virtual Double Category Structure on Double Presheaves}

The virtual double category structure on $\mathbf{Lax}(\mathbb B^{op},\spn)$ will be given by taking so-called ``modules" as proarrows and ``multimodulations" as the multicells. Here we revisit the the definitions for lax functors between arbitrary double categories. A path of proarrows is a sequence $\mathbf m = (m_1, \dots, m_k)$ such that, reading left to right, the target of one proarrow is the source of the next. The external composite is denoted by $[\mathbf m]$.

\begin{define}[Cf. \S 3.2 \cite{YonedaThyDblCatz}] \label{def: module}  A \textbf{module} between lax functors $M\colon F\slashedrightarrow G\colon \mathbb A\rightrightarrows \mathbb B$ of double categories assigns
\begin{enumerate}
\item to each proarrow $m\colon A \slashedrightarrow B$ of $\mathbb A$, a proarrow $Mm\colon FA\slashedrightarrow GB$ of $\mathbb B$;
\item to each cell of $\mathbb A$ as at left, one of $\mathbb B$ as at right
$$\xymatrix{ \ar @{} [dr] |{\theta} A \ar[r]^{m}|-@{|} \ar[d]_{f} & B \ar[d]^{g} & & & & \ar @{} [dr] |{M\theta} FA \ar[r]^{Mm}|-@{|} \ar[d]_{Ff} & GB \ar[d]^{Gg} \\  C \ar[r]_{n}|-@{|} & D & & & & FC \ar[r]_{Mn}|-@{|} & GD \\
}$$
\item for each pair of proarrows $m\colon A\slashedrightarrow B$ and $n\colon B\slashedrightarrow C$, action multicells of $\mathbb B$
$$\xymatrix{ \ar @{} [drr] |{\lambda_{m,n}} \cdot \ar[r]^{Fm}|-@{|} \ar@{=}[d]_{}& \cdot \ar[r]^{Mn}|-@{|} & \cdot  \ar@{=}[d] & & &  \ar @{} [drr] |{\rho_{m,n}} \cdot \ar[r]^{Mm}|-@{|} \ar@{=}[d]_{}& \cdot \ar[r]^{Gn}|-@{|} & \cdot  \ar@{=}[d]    \\
\cdot \ar[rr]_{M(n\otimes m)}|-@{|}& & \cdot & & & \cdot \ar[rr]_{M(n\otimes m)}|-@{|}& & \cdot \\ 
}$$
\end{enumerate}
in such a way that the following axioms hold.
\begin{enumerate}
\item{}[Functoriality] For any (internal) composite of monocells $\beta\alpha$ and any proarrow $m$, the equations $M\beta M\alpha = M(\beta\alpha)$ and $M1_m= 1_{Mm}$ hold;
\item{}[Naturality] For any external composite $\beta\otimes \alpha$, the equalities
$$\xymatrix{ \ar@{}[drr]|{\lambda} \cdot \ar[r]^{Fm}|-@{|} \ar@{=}[d]& \cdot \ar[r]^{Mn}|-@{|}& \cdot \ar@{=}[d]& & &  & \ar@{}[dr]|{F\alpha} \cdot \ar[r]^{Fm}|-@{|} \ar[d]  & \ar@{}[dr]|{M\beta} \cdot \ar[r]^{Mn}|-@{|} \ar[d]& \cdot\ar[d] \\ 
\ar@{}[drr]|{M(\beta\otimes \alpha)}\cdot \ar[rr]_{}|-@{|} \ar[d] &  & \cdot \ar[d] & & =& & \ar@{}[drr]|{\lambda}\cdot  \ar[r]|-@{|} \ar@{=}[d]& \cdot \ar[r]|-@{|}& \cdot \ar@{=}[d] \\ 
\cdot \ar[rr]_{M(q\otimes p)}|-@{|} &  & \cdot & & & & \cdot \ar[rr]_{M(q\otimes p)}|-@{|}  &  & \cdot
}$$
and
$$\xymatrix{ \ar@{}[drr]|{\rho} \cdot \ar[r]^{Mm}|-@{|} \ar@{=}[d]& \cdot \ar[r]^{Gn}|-@{|}& \cdot \ar@{=}[d]& & &  & \ar@{}[dr]|{M\alpha} \cdot \ar[r]^{Mm}|-@{|} \ar[d]  & \ar@{}[dr]|{G\beta} \cdot \ar[r]^{Gn}|-@{|} \ar[d]& \cdot\ar[d] \\ 
\ar@{}[drr]|{M(\beta\otimes \alpha)}\cdot \ar[rr]_{}|-@{|} \ar[d] &  & \cdot \ar[d] & & =& & \ar@{}[drr]|{\rho}\cdot  \ar[r] \ar@{=}[d]& \cdot \ar[r]& \cdot \ar@{=}[d] \\ 
\cdot \ar[rr]_{M(q\otimes p)}|-@{|} &  & \cdot & & & & \cdot \ar[rr]_{M(q\otimes p)}|-@{|}  &  & \cdot
}$$
both hold.
\item{}[Associativity]  For any composable sequence of proarrows $(m,n,p)$ of $\mathbb A$, the equalities (modulo suppressed associativity isomorphisms)
$$\xymatrix{ \ar@{}[dr]|{1}\cdot \ar@{=}[d] \ar[r]^{Fm}|-@{|} & \ar@{}[drr]|{\lambda} \cdot \ar@{=}[d] \ar[r]^{Fn}|-@{|} & \cdot \ar[r]^{Mw}|-@{|} & \cdot \ar@{=}[d] & & & & \ar@{}[drr]|{F\gamma}\cdot \ar@{=}[d] \ar[r]^{Fm}|-@{|} & \cdot \ar[r]^{Fn}|-@{|} & \cdot \ar@{}[dr]|{1} \ar@{=}[d]  \ar[r]^{Mw}|-@{|} & \cdot \ar@{=}[d] \\
\ar@{}[drrr]|{\lambda} \cdot \ar@{=}[d] \ar[r]|-@{|} & \cdot \ar[rr]|-@{|} &   & \cdot \ar@{=}[d] & & = & & \ar@{}[drrr]|{\lambda} \cdot \ar@{=}[d] \ar[rr]|-@{|} &   & \cdot \ar[r]|-@{|} & \cdot \ar@{=}[d]\\
\cdot \ar[rrr]_{M(p\otimes (n\otimes m))}|-@{|} &  &  & \cdot  & & & & \cdot \ar[rrr]_{M((p\otimes n)\otimes m)}|-@{|}&  &  & \cdot\\
}$$
and
$$\xymatrix{ \ar@{}[drr]|{\rho} \cdot \ar[r]^{Mm}|-@{|} \ar@{=}[d] & \cdot \ar[r]^{Gn}|-@{|} & \ar@{}[dr]|{1} \cdot \ar[r]^{Gp}|-@{|} \ar@{=}[d] & \cdot \ar@{=}[d] & & & & \ar@{}[dr]|{1} \cdot \ar[r]^{Mm}|-@{|} \ar@{=}[d]  & \ar@{}[drr]|{G\gamma} \cdot \ar[r]^{Gn}|-@{|} \ar@{=}[d] &  \cdot \ar[r]^{Gp}|-@{|} & \cdot \ar@{=}[d] \\
\ar@{}[drrr]|{\rho} \cdot \ar@{=}[d] \ar[rr]|-@{|} &  & \cdot \ar[r]|-@{|} & \cdot \ar@{=}[d]  & & = & & \ar@{}[drrr]|{\rho}\cdot \ar@{=}[d] \ar[r]|-@{|} & \cdot \ar[rr]|-@{|} &  & \cdot \ar@{=}[d]\\
\cdot \ar[rrr]_{M(p\otimes (n\otimes m))}|-@{|} &  &  & \cdot  & & & & \cdot \ar[rrr]_{M((p\otimes n)\otimes m)}|-@{|} &  &  & \cdot\\
}$$
are valid.
\item{}[Compatibility] The composites
$$\xymatrix{ \ar@{}[drr]|{\lambda} \cdot \ar[r]^{Fm}|-@{|} \ar@{=}[d] & \cdot \ar[r]^{Mn}|-@{|} & \ar@{}[dr]|{1} \cdot \ar[r]^{Gp}|-@{|} \ar@{=}[d] & \cdot \ar@{=}[d] & & & & \ar@{}[dr]|{1} \cdot \ar[r]^{Fm}|-@{|} \ar@{=}[d]  & \ar@{}[drr]|{\rho} \cdot \ar[r]^{Mn}|-@{|} \ar@{=}[d] &  \cdot \ar[r]^{Gp}|-@{|} & \cdot \ar@{=}[d] \\
\ar@{}[drrr]|{\rho} \cdot \ar@{=}[d] \ar[rr]|-@{|} &  & \cdot \ar[r]|-@{|} & \cdot \ar@{=}[d]  & & = & & \ar@{}[drrr]|{\lambda}\cdot \ar@{=}[d] \ar[r]|-@{|} & \cdot \ar[rr]|-@{|} &  & \cdot \ar@{=}[d]\\
\cdot \ar[rrr]_{M(p\otimes (n\otimes m))}|-@{|} &  &  & \cdot  & & & & \cdot \ar[rrr]_{M((p\otimes n)\otimes m)}|-@{|} &  &  & \cdot\\
}$$
are equal.
\item{}[Unit] For any proarrow $m\colon A\slashedrightarrow B$ of $\mathbb A$, the composites
$$\xymatrix{ \ar@{}[dr]|{\gamma} \cdot \ar[r]^{u_{FA}}|-@{|} \ar@{=}[d]& \ar@{}[dr]|{1} \cdot \ar[r]^{Mm}|-@{|} \ar@{=}[d] & \cdot \ar@{=}[d] & & &  & \ar@{}[dr]|{1} \cdot \ar[r]^{Mm}|-@{|} \ar[d]  & \ar@{}[dr]|{\gamma} \cdot \ar[r]^{u_{GB}}|-@{|} \ar[d]& \cdot\ar[d] \\ 
\ar@{}[drr]|{\lambda} \cdot \ar[r]_{Fu_A}|-@{|} \ar[d] & \cdot \ar[r]|-@{|} & \cdot \ar[d] & & \text{and}& & \ar@{}[drr]|{\rho}\cdot  \ar[r]|-@{|} \ar@{=}[d]& \cdot \ar[r]_{Gu_B}|-@{|}& \cdot \ar@{=}[d] \\ 
\cdot \ar[rr]_{M(m\otimes u_A)}|-@{|} &  & \cdot & & & & \cdot \ar[rr]_{M(u_B\otimes m)}|-@{|}  &  & \cdot
}$$
are equal to the respective canonical composition multicells for $(m_{FA},Mm)$ and $(Mu,m_{GA})$.
\end{enumerate}
\end{define}

Multicells are given by the notion of a ``multimodulation," recalled next. A path of cells $\theta\colon (\theta_1,\dots, \theta_k)$ a sequence of cells $\theta_1\colon m_1\Rightarrow n_1,\dots, \theta_k\colon m_k\Rightarrow n_k$ has the target of a given cell equal to the source of the next. Externally composable sequences of modules $M_1,\dots, M_k$ between lax functors will be thought of as proarrows $F^i\slashedrightarrow F^{i+1}$ from a lax functor $F^i$ to one $F^{i+1}$, notated with superscripts so as not to confuse these with the components of the functors.

\begin{define}[See \S 4.1 of \cite{YonedaThyDblCatz} and \S 1.2.3 of \cite{ModuleComposition}] \label{multimodulation defn} A \textbf{multimodulation}
$$\xymatrix{ 
\ar@{}[drrr]|{\mu}\cdot\ar[d]_{\tau} \ar[r]^{M_1}|-@{|} & \cdot\ar[r]^{M_2}|-@{|} &\cdots \ar[r]^{M_k}|-@{|} & \cdot \ar[d]^{\sigma} \\
\cdot \ar[rrr]_{N}|-@{|}&&& \cdot
}$$
from modules $M_i$ to $N$ with source $\tau$ and target $\sigma$ assigns to each path $\mathbf m = (m_1, \dots, m_k)$ of proarrows of $\mathbb A$, a multicell
$$\xymatrix{ 
\ar@{}[drrr]|{\mu_{\mathbf m}}\cdot\ar[d]_{\tau} \ar[r]^{M_1m_1}|-@{|} & \cdot\ar[r]^{M_2m_2}|-@{|} &\cdots \ar[r]^{M_km_k}|-@{|} & \cdot \ar[d]^{\sigma} \\
\cdot \ar[rrr]_{N{[\mathbf m]}}|-@{|}&&& \cdot
}$$
in such a way that the following axioms are satisfied.
\begin{enumerate}
\item{}[Naturality] for any path of cells $\theta_1\colon m_1\Rightarrow n_1,\dots, \theta_k\colon m_k\Rightarrow n_k$, the two composites 
$$\xymatrix{
\cdot \ar[d] \ar@{}[dr]|{M_1\theta_1} \ar[r]^{M_1m_1}|-@{|} & \cdot \ar[d]\ar@{}[dr]|{M_2\theta_2} \ar[r]^{M_2m_2}|-@{|} & \cdots \ar@{}[dr]|{M_k\theta_k} \ar[r]^{M_km_k} &\ar[d] \cdot  && \cdot \ar@{}[drrr]|{\mu_{\mathbf m}} \ar[d] \ar[r]^{M_1m_1}|-@{|} & \cdot \ar[r]^{M_2m_2}|-@{|} & \cdots \ar[r]^{M_km_k} &\ar[d] \cdot\\
\cdot \ar[d] \ar@{}[drrr]|{\mu_{\mathbf n}} \ar[r]_{M_1n_1}|-@{|} & \cdot \ar[r]_{M_2n_2} & \cdots \ar[r]_{M_kn_k} &\cdot \ar[d] & = & \cdot\ar[d] \ar@{}[drrr]|{N[\theta]} \ar[rrr]_{N[\mathbf m]}|-@{|} &&& \cdot\ar[d]\\
\cdot \ar[rrr]_{N[\mathbf n]} &&&\cdot & & \cdot \ar[rrr]_{N[\mathbf n]}|-@{|} &&&\cdot 
}$$
are equal.
\item The following equivariance axioms are satisfied:
\begin{enumerate}
\item{}[Left and Right Equivariance] For any paths of proarrows $y\mathbf m = (m_1,\dots, m_k,y)$ and $\mathbf mx = (x, m_1, \dots, m_k)$, the composites on either side of 
$$\xymatrix{ 
\ar@{}[dr]|{\tau_x}\cdot \ar[r]^{F^0x}|-@{|} \ar[d]_{} & \ar@{}[drrr]|{\mu_{[\mathbf m]}}\cdot \ar[r]^{M_1m_1}|-@{|} \ar[d]^{} & \cdot \ar[r]^{M_2m_2}|-@{|} &\cdots \ar[r]^{M_km_k}|-@{|}& \cdot \ar[d]^{} & & \ar@{}[drr]|{\lambda} \cdot \ar@{=}[d] \ar[r]^{F^0x}|-@{|} & \cdot \ar[r]^{M_1m_1}|-@{|} & \ar@{}[dr]|{1} \cdot \ar@{=}[d] \ar[r]^{M_2m_2}|-@{|} & \ar@{}[dr]|{1}\cdots \ar[r]^{M_km_k}|-@{|} & \cdot\ar@{=}[d] \\
\ar@{}[drrrr]|{\lambda}\cdot \ar[r]_{G^0x}|-@{|} \ar@{=}[d] &\cdot \ar[rrr]_{N[\mathbf m]}|-@{|}  &  &  & \cdot \ar@{=}[d] & = &\ar@{}[drrrr]|{\mu_{[\mathbf mx]}} \cdot \ar[rr]_{M_1(m_1\otimes x)}|-@{|} \ar[d]_{} &  & \cdot \ar[r]_{M_2m_2}|-@{|} & \cdots \ar[r]_{M_km_k}|-@{|}& \cdot \ar[d]^{} \\
\cdot \ar[rrrr]_{N[\mathbf mx]}|-@{|} & && & \cdot &&\cdot \ar[rrrr]_{N_{[\mathbf mx]}}|-@{|} & &&&\cdot 
}$$
and
$$\xymatrix{ 
\ar@{}[drrr]|{\mu_{[\mathbf m]}}\cdot \ar[d]_{}  \ar[r]^{M_1m_1}|-@{|}&\cdots \ar[r]^{}|-@{|} & \cdot \ar[r]^{M_km_k}|-@{|}  & \ar@{}[dr]|{\tau_y} \cdot \ar[d] \ar[r]^{F^ny}|-@{|} & \cdot \ar[d]^{} && \ar@{}[dr]|{1}\cdot \ar[r]^{M_1m_1}|-@{|} \ar@{=}[d]& \ar@{}[dr]|{1} \cdots \ar[r]^{}|-@{|}  & \ar@{}[drr]|{\rho}\cdot\ar@{=}[d] \ar[r]^{M_km_k}|-@{|} &\cdot\ar[r]^{F^ny}|-@{|}&\cdot \ar@{=}[d]  \\
\ar@{}[drrrr]|{\rho}\cdot \ar@{=}[d] \ar[rrr]_{N[\mathbf m]}|-@{|} &&   & \cdot \ar[r]_{G^ny}|-@{|} &\cdot \ar@{=}[d] & = & \ar@{}[drrrr]|{\mu[y\mathbf m]}\cdot \ar[d]_{} \ar[r]_{M_1m_1}|-@{|} &\cdots \ar[r]_{}|-@{|} &\cdot \ar[rr]_{M_k(y\otimes m_k)}|-@{|} && \cdot\ar[d]^{}\\
\cdot \ar[rrrr]_{N[y\mathbf m]}|-@{|} &&& & &&\cdot \ar[rrrr]_{N[y\mathbf m]}|-@{|} && & &\cdot
}$$
are equal.
\item{}[Inner Equivariance] For any path of proarrows $(m_1,\dots, m_i,x,m_{i+1},\dots, m_k)$, the composite
$$\xymatrix{ \ar@{}[dr]|{1}\cdot \ar@{=}[d] \ar[r]^{M_1m_1}|-@{|} &\ar@{}[dr]|{1}\cdots\ar[r]^{}|-@{|} &\ar@{}[drr]|{\rho}\cdot\ar@{=}[d]\ar[r]^{M_im_i}|-@{|}&\cdot\ar[r]^{F^ix}|-@{|}&\ar@{}[dr]|{1}\cdot\ar@{=}[d]\ar[r]^{}|-@{|}&\ar@{}[dr]|{1}\cdots\ar[r]^{M_km_k}|-@{|}&\cdot \ar@{=}[d]\\
\ar@{}[drrrrrr]|{\mu_{[\mathbf m]}}\cdot\ar[d]_{}\ar[r]^{}|-@{|}&\cdots\ar[r]^{}|-@{|}&\cdot\ar[rr]^{}|-@{|}&&\cdot\ar[r]^{}|-@{|}&\cdots\ar[r]^{}|-@{|}&\cdot \ar[d]^{} \\
\cdot \ar[rrrrrr]_{N[\mathbf m]}|-@{|} &&&&&& \cdot
}$$
is equal to
$$\xymatrix{ \ar@{}[dr]|{1}\cdot \ar@{=}[d] \ar[r]^{M_1m_1}|-@{|} &\ar@{}[dr]|{1}\cdots\ar[r]^{}|-@{|} &\ar@{}[drr]|{\lambda}\cdot\ar@{=}[d]\ar[r]^{F^ix}|-@{|}&\cdot\ar[r]^{M_{i+1}m_{i+1}}|-@{|}&\ar@{}[dr]|{1}\cdot\ar@{=}[d]\ar[r]^{}|-@{|}&\ar@{}[dr]|{1}\cdots\ar[r]^{M_km_k}|-@{|}&\cdot \ar@{=}[d]\\
\ar@{}[drrrrrr]|{\mu_{[\mathbf m]}}\cdot\ar[d]_{}\ar[r]^{}|-@{|}&\cdots\ar[r]^{}|-@{|}&\cdot\ar[rr]^{}|-@{|}&&\cdot\ar[r]^{}|-@{|}&\cdots\ar[r]^{}|-@{|}&\cdot \ar[d]^{} \\
\cdot \ar[rrrrrr]_{N[\mathbf m]}|-@{|} &&&&&& \cdot \\
}$$
for $i=1,\dots, k-1$.
\end{enumerate}
\end{enumerate}
\end{define}

\begin{theo}[Par\'e]  Double functors $F\colon \mathbb A\to\mathbb B$ and horizontal transformations, together with modules and their multi-modulations giving the proarrows and multicells, comprise a virtual double category denoted by $\dbllax(\mathbb A,\mathbb B)$.
\end{theo}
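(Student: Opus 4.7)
The plan is to verify the axioms of Definition \ref{virt dbl cat defn} directly. The underlying 1-category of objects and arrows is $\lax(\mathbb A,\mathbb B)$ from Definition \ref{define: lax nat transf}; proarrows $F\slashedrightarrow G$ are modules per Definition \ref{def: module}; multicells are multimodulations per Definition \ref{multimodulation defn}. Source, target and multisource assignments are read off directly from the data. What remains is to specify identity multicells and multi-composition, and then to check the unit and associativity laws together with re-verifying the defining axioms of a multimodulation for each composite.

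For identities, given a module $M\colon F\slashedrightarrow G$, define the unary identity multimodulation $1_M\colon (M)\Rightarrow M$ with source and target arrows $1_F$ and $1_G$ by $(1_M)_{(m)}:=1_{Mm}$; its naturality and equivariance axioms reduce immediately to the functoriality and action axioms of $M$ itself. For composition, suppose $\mu$ has multisource $(M_1,\dots,M_k)$, target $N$, and vertical arrows $\tau,\sigma$, and for each $i$ suppose $\nu_i$ has multisource $(L_{i,1},\dots,L_{i,j_i})$, target $M_i$, and vertical arrows $\rho_{i-1},\rho_i$ composable with $\tau,\sigma$. Define the composite $\mu\circ(\nu_1,\dots,\nu_k)$ on a path $\mathbf p=(p_{i,j})$ to be the cell of $\mathbb B$ obtained by first stacking the multicells $(\nu_i)_{(p_{i,1},\dots,p_{i,j_i})}$ side-by-side and then composing internally with $\mu$ applied to the $k$-tuple of external composites $([\mathbf p_1],\dots,[\mathbf p_k])$. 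The source arrow is $\tau\rho_0$ and the target is $\sigma\rho_k$.

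Cell naturality of the composite follows by combining the naturality of each $\nu_i$ (on changes inside a sub-path, absorbed into the external composite by the functoriality of $N$) with the naturality of $\mu$. The left and right equivariance axioms reduce to the equivariance of $\nu_1$ and $\nu_k$ at the outer boundaries, postcomposed with $\mu$. Unitality and associativity of multi-composition reduce to the unit and associativity laws for internal cell composition in $\mathbb B$ together with the functoriality clauses of the modules involved; the calculation is the same one that shows any ``multicategory of pasting shapes'' is coherent, and no further ingredients intervene.

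The main obstacle is the inner equivariance axiom for a composite $\mu\circ(\nu_1,\dots,\nu_k)$. An inserted proarrow $x$ at an internal boundary of the combined path is of two kinds. Boundaries strictly inside a single sub-path $(p_{i,1},\dots,p_{i,j_i})$ are absorbed by the inner equivariance of $\nu_i$. Boundaries at the join of consecutive sub-paths $p_{i,j_i}$ and $p_{i+1,1}$ are the delicate case: sliding $x$ to the left invokes the right equivariance of $\nu_i$, sliding it to the right invokes the left equivariance of $\nu_{i+1}$, and the two resulting pastings in $\mathbb B$ must be reconciled by the inner equivariance of $\mu$ at position $i$ together with the compatibility axiom of Definition \ref{def: module} applied to $N$. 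Although the diagrams become large, every reduction is forced by an axiom already in hand, so the argument is lengthy but entirely mechanical.
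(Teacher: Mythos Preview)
The paper does not give its own argument here; it simply cites Par\'e (the lead-up to Theorem~1.2.5 of \cite{ModuleComposition}) for the construction and verification. Your proposal is a direct axiom check, and that is exactly the route taken in the cited reference: define the composite multimodulation at a path $\mathbf p=(p_{i,j})$ by pasting the $(\nu_i)_{\mathbf p_i}$ externally in $\mathbb B$ and composing with $\mu_{([\mathbf p_1],\dots,[\mathbf p_k])}$, then verify naturality and the three equivariance conditions. Your identification of the inner-equivariance check at a join between sub-paths as the only nontrivial case, and your reduction of it to the right equivariance of $\nu_i$, the left equivariance of $\nu_{i+1}$, and the inner equivariance of $\mu$ at slot $i$, is the correct bookkeeping.

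One small overclaim: you do not need the compatibility axiom of $N$ for the join case. Once right equivariance of $\nu_i$ rewrites the ``slide $x$ left'' side as $\mu$ precomposed with $\rho_{M_i}$ (and $(\rho_i)_x$), and left equivariance of $\nu_{i+1}$ rewrites the ``slide $x$ right'' side as $\mu$ precomposed with $\lambda_{M_{i+1}}$ (and the same $(\rho_i)_x$), the inner equivariance of $\mu$ at position $i$ alone equates the two; the module $N$ plays no role beyond being the common target. Otherwise your sketch is sound and matches what the reference does.
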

\begin{proof} See the lead-up to Theorem 1.2.5 of \cite{ModuleComposition}.  \end{proof}

\begin{remark} Our main interest is of course in the virtual double category $\dbllax(\mathbb B^{op},\spn)$. By the Theorem, in general, this is not a genuine double category. This is because composition of proarrows need not exist.  The paper \cite{ModuleComposition} is a dedicated study of this issue.  
\end{remark}

\subsection{Monoids and Modules}

The additional structure on $\dfib(\mathbb B)$ making it a virtual double category goes by well-known terminology from another context.  We already know that $\dfib(\mathbb B)$ could have been defined as $\cat(\dfib)/\mathbb B$. Another way of look at $\cat(\dfib)$ is that it is the category $\mon(\Span(\dfib))$ of monoids in spans in discrete fibrations. In general $\mon(\Span(\C))$ for a category $\C$ with finite limits is the underlying category of the virtual double category $\prof(\C) = \dblmod(\Span(\C))$ of modules in spans in $\C$ as in \cite{Operads} or \cite{FrameworkGenMulticatz}. So, we take the modules and their multicells from this context as the virtual double category structure on $\dfib(\mathbb B)$. Here are the definitions.

\begin{define}[Cf. \S 5.3.1 \cite{Operads} or \S 2.8 of \cite{FrameworkGenMulticatz}] \label{def: monoids and modules} Let $\mathbb D$ denote a virtual double category. Define its \textbf{virtual double category of monoids and modules}, denoted by $\dblmod(\mathbb D)$, by taking
\begin{enumerate}
\item objects: \textbf{monoids}, namely, triples $(r,\mu,\eta)$ consisting of a proarrow $r\colon A\slashedrightarrow A$ and cells 
$$\xymatrix{  \ar@{}[drr]|{\mu} \ar@{=}[d] \ar[r]^{r}|-@{|} A  & A \ar[r]^{r}|-@{|}  &  A \ar@{=}[d] & \ar@{}[dr]|{\eta} A \ar@{=}[r] \ar@{=}[d] & A \ar@{=}[d] \\
A \ar[rr]_{r}|-@{|} & & A & A \ar[r]_{r}|-@{|} & A \\
}$$
satisfying the usual axioms for a monoid, namely, the multiplication law $\mu(1,\mu) = \mu(\mu,1)$ and the unit laws $\mu(1,\eta)=1$ and $\mu(\eta,1)=1$;
\item arrows: monoid homomorphisms $(r,\mu,\eta) \to (s,\nu,\epsilon)$, namely, those pairs $(f,\phi)$ consisting of an arrow $f\colon A\to B$ and a cell
$$\xymatrix{ \ar@{}[dr]|{\phi} A \ar[d]_{f} \ar[r]^{r}|-@{|} & A\ar[d]^{f}\\
B \ar[r]_{s}|-@{|}& B\\
}$$
satisfying the unit axiom $\phi\eta = \epsilon f$ and multiplication axiom $\nu(\phi,\phi) = \phi\mu$.
\item proarrows: so-called \textbf{modules} $(r,\mu,\eta) \slashedrightarrow (s,\nu, \epsilon)$, namely, triples $(m,\lambda,\rho)$ with $m\colon A\slashedrightarrow B$ a proarrow and $\lambda$, $\rho$ left and right action cells
$$\xymatrix{ \ar@{}[drr]|{\lambda} A \ar[r]^{r}|-@{|} \ar@{=}[d] & A \ar[r]^{m}|-@{|} & B \ar@{=}[d] & \ar@{}[drr]|{\rho} A \ar@{=}[d] \ar[r]^{m}|-@{|} & B \ar[r]^{s}|-@{|} & B \ar@{=}[d] \\
A \ar[rr]_{m}|-@{|} &  & B & A \ar[rr]_{m}|-@{|} & & B  \\
}$$
satisfying the module axioms $\lambda(\mu, 1) = \lambda(1,\lambda)$ and $\rho(1,\mu) = \rho(\rho,1)$ for the multiplication and $\lambda(\eta,1) = 1$ and $\rho(1,\eta) = 1$ for the units; a sequence of modules consists of finitely many modules $(m_i,\lambda_i,\rho_i)$ for which $\src\,m_{i+1} = \tgt\,m_{i}$ and $s_{i+1} = r_{i}$ both hold;
\item multicells from a sequence of modules $(m_i, \lambda_i,\rho_i)$ to one $(n, \lambda, \rho)$ consist of those multicells in $\mathbb A$ 
$$\xymatrix{ \ar@{}[drr]|{\gamma} \cdot \ar[d]_{f} \ar[r]^{m_1}|-@{|} & \cdots \ar[r]^{m_p}|-@{|} & \cdot \ar[d]^{g} \\
\cdot \ar[rr]_{n}|-@{|} && \cdot \\
}$$
satisfying the equivariance axioms expressed by the equalities of composite cells:
\begin{enumerate}
\item{}[Left]
$$\xymatrix{ 
\ar@{}[dr]|{\phi}\cdot \ar[r]^{r_1}|-@{|} \ar[d]_{f} & \ar@{}[drrr]|{\gamma}\cdot \ar[r]^{m_1}|-@{|} \ar[d]^{f} & \cdot \ar[r]^{m_2}|-@{|} &\cdots \ar[r]^{m_p}|-@{|}& \cdot \ar[d]^{g} & & \ar@{}[drr]|{\lambda} \cdot \ar@{=}[d] \ar[r]^{r_1}|-@{|} & \cdot \ar[r]^{m_1}|-@{|} & \ar@{}[dr]|{1} \cdot \ar@{=}[d] \ar[r]^{m_2}|-@{|} & \ar@{}[dr]|{1}\cdots \ar[r]^{m_p}|-@{|} & \cdot\ar@{=}[d] \\
\ar@{}[drrrr]|{\lambda}\cdot \ar[r]_{s_1}|-@{|} \ar@{=}[d] &\cdot \ar[rrr]_{n}|-@{|}  &  &  & \cdot \ar@{=}[d] & = &\ar@{}[drrrr]|{\gamma} \cdot \ar[rr]_{m_1}|-@{|} \ar[d]_{f} &  & \cdot \ar[r]_{m_2}|-@{|} & \cdots \ar[r]_{m_p}|-@{|}& \cdot \ar[d]^{g} \\
\cdot \ar[rrrr]_{n}|-@{|} & && & \cdot &&\cdot \ar[rrrr]_{n}|-@{|} & &&&\cdot 
}$$
\item{}[Right]
$$\xymatrix{ 
\ar@{}[drrr]|{\gamma}\cdot \ar[d]_{f}  \ar[r]^{m_1}|-@{|}&\cdots \ar[r]^{m_{p-1}}|-@{|} & \cdot \ar[r]^{m_p}|-@{|}  & \ar@{}[dr]|{\psi} \cdot \ar[d]_{g} \ar[r]^{r_p}|-@{|} & \cdot \ar[d]^{g} && \ar@{}[dr]|{1}\cdot \ar[r]^{m_1}|-@{|} \ar@{=}[d]& \ar@{}[dr]|{1} \cdots \ar[r]^{m_{p-1}}|-@{|}  & \ar@{}[drr]|{\rho}\cdot\ar@{=}[d] \ar[r]^{m_p}|-@{|} &\cdot\ar[r]^{r_p}|-@{|}&\cdot \ar@{=}[d]  \\
\ar@{}[drrrr]|{\rho}\cdot \ar@{=}[d] \ar[rrr]_{n}|-@{|} &&   & \cdot \ar[r]_{s_q}|-@{|} &\cdot \ar@{=}[d] &=&\ar@{}[drrrr]|{\gamma}\cdot \ar[d]_{f} \ar[r]_{m_1}|-@{|} &\cdots \ar[r]_{m_{p-1}}|-@{|} &\cdot \ar[rr]_{m_p}|-@{|} && \cdot\ar[d]^{g}\\
\cdot \ar[rrrr]_{n}|-@{|} &&& & &&\cdot \ar[rrrr]_{n}|-@{|} && & &\cdot
}$$
\item{}[Inner]
$$\xymatrix{ \ar@{}[dr]|{1}\cdot \ar@{=}[d] \ar[r]^{m_1}|-@{|} &\ar@{}[dr]|{1}\cdots\ar[r]^{m_{i-1}}|-@{|} &\ar@{}[drr]|{\rho}\cdot\ar@{=}[d]\ar[r]^{m_i}|-@{|}&\cdot\ar[r]^{r_i}|-@{|}&\ar@{}[dr]|{1}\cdot\ar@{=}[d]\ar[r]^{m_{i+1}}|-@{|}&\ar@{}[dr]|{1}\cdots\ar[r]^{m_p}|-@{|}&\cdot \ar@{=}[d]\\
\ar@{}[drrrrrr]|{\gamma}\cdot\ar[d]_{f}\ar[r]^{}|-@{|}&\cdots\ar[r]^{}|-@{|}&\cdot\ar[rr]^{}|-@{|}&&\cdot\ar[r]^{}|-@{|}&\cdots\ar[r]^{}|-@{|}&\cdot \ar[d]^{g} \\
\cdot \ar[rrrrrr]_{n}|-@{|} &&&&&& \cdot
}$$
is equal to
$$\xymatrix{ \ar@{}[dr]|{1}\cdot \ar@{=}[d] \ar[r]^{m_1}|-@{|} &\ar@{}[dr]|{1}\cdots\ar[r]^{m_{i}}|-@{|} &\ar@{}[drr]|{\lambda}\cdot\ar@{=}[d]\ar[r]^{r_i}|-@{|}&\cdot\ar[r]^{m_{i+1}}|-@{|}&\ar@{}[dr]|{1}\cdot\ar@{=}[d]\ar[r]^{m_{i+2}}|-@{|}&\ar@{}[dr]|{1}\cdots\ar[r]^{m_p}|-@{|}&\cdot \ar@{=}[d]\\
\ar@{}[drrrrrr]|{\gamma}\cdot\ar[d]_{f}\ar[r]^{}|-@{|}&\cdots\ar[r]^{}|-@{|}&\cdot\ar[rr]^{}|-@{|}&&\cdot\ar[r]^{}|-@{|}&\cdots\ar[r]^{}|-@{|}&\cdot \ar[d]^{g} \\
\cdot \ar[rrrrrr]_{n}|-@{|} &&&&&& \cdot \\
}$$
for $i=1,\dots, p-1$.
\end{enumerate}
\end{enumerate}
Compositions and identities are given by those in $\mathbb A$.
\end{define}

\begin{prop} For any virtual double category $\mathbb D$, $\dblmod(\mathbb D)$ has units.
\end{prop}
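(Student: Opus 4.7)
My plan is to write down an explicit unit module and nullary opcartesian cell for each monoid, and then verify the universal property by hand. For $R = (r,\mu,\eta)$ a monoid in $\mathbb D$, the natural candidate for its unit in $\dblmod(\mathbb D)$ is the module $u_R := (r,\mu,\mu)$, that is, the proarrow $r$ equipped with both left and right actions taken to be the monoid multiplication $\mu$. The first step is to verify that $u_R$ is a module: each of its axioms (associativity, compatibility of the two actions, and the two unit laws) reduces immediately to a monoid axiom for $(r,\mu,\eta)$. The proposed nullary opcartesian cell will be the monoid unit $\eta$ itself, viewed as a nullary multicell $() \Rightarrow u_R$ in $\dblmod(\mathbb D)$; no further verification is needed because the equivariance conditions of Definition \ref{def: monoids and modules} are indexed over positions of a nonempty multisource, and therefore vacuous when the source is empty.

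To check the universal property, I will fix any composable sequence of modules $(M_1, \ldots, M_{j-1}, u_R, M_{j+1}, \ldots, M_k)$ with $u_R$ at some interior position $j$ and any target module $L$, and build an explicit inverse to the precomposition map $\hat\nu \mapsto \hat\nu \cdot_j \eta$. The inverse will send $\nu'\colon (M_1, \ldots, M_{j-1}, M_{j+1}, \ldots, M_k) \Rightarrow L$ to the cell $\nu' \cdot_{j-1} \rho_{M_{j-1}}$, obtained by plugging in the right action of the left-adjacent module on the fresh copy of $r$. By the inner equivariance of $\nu'$ at the $R$-joint between positions $j-1$ and $j$, this agrees with $\nu' \cdot_j \lambda_{M_{j+1}}$. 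The corner cases $j=1$, $j=k$, or $k=1$ are handled by the same device, using $\lambda_{M_2}$, $\rho_{M_{k-1}}$, or one of the actions of the target $L$ respectively.

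Both halves of the bijection should then follow by routine bookkeeping with the multicategorical substitution law. One direction, $(\nu' \cdot_{j-1} \rho_{M_{j-1}}) \cdot_j \eta = \nu'$, reduces at once to the module unit law $\rho(1,\eta) = 1$. The other direction recovers $\hat\nu$ from $\nu' := \hat\nu \cdot_j \eta$ by invoking the inner equivariance of $\hat\nu$ at the joint between $M_{j-1}$ and $u_R$ (where $\lambda_{u_R}$ is $\mu$) together with the monoid unit law $\mu(1, \eta) = 1$. The main obstacle I anticipate is confirming that the cell $\nu' \cdot_{j-1} \rho_{M_{j-1}}$ actually belongs to $\dblmod(\mathbb D)$, i.e. satisfies the full set of equivariance axioms for the longer multisource: the two newly created inner equivariances flanking the inserted $r$ should unpack via substitution and the inner equivariance of $\nu'$ into the module associativity axioms for $M_{j-1}$ and $M_{j+1}$, while the remaining equivariances descend directly from those already satisfied by $\nu'$.
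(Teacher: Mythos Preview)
Your approach is the same as the paper's: the unit module on a monoid $(r,\mu,\eta)$ is $r$ with both actions given by $\mu$, and the opcartesian cell is $\eta$. The paper states only this much and defers all verification to \S 5.5 of \cite{FrameworkGenMulticatz}, whereas you supply the explicit inverse $\nu' \mapsto \nu' \cdot_{j-1} \rho_{M_{j-1}}$ and the bookkeeping with the module unit law and inner equivariance; your outline of that computation is correct, and the anticipated check that the inverse lands in $\dblmod(\mathbb D)$ does reduce as you say to the module associativity of $M_{j-1}$ and $M_{j+1}$ together with the equivariances already possessed by $\nu'$.
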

\begin{proof} Any object in $\dblmod(\mathbb D)$ is a monoid. Its equipped multiplication gives the unit proarrow. For more see \S 5.5 of \cite{FrameworkGenMulticatz}. \end{proof}

\begin{remark} The definition above omits most of the diagrams and states just the equations out of space considerations. However, upon writing down all the diagrams, one might notice a formal similarity between these axioms and those for modules and multimodulations in $\dbllax(\mathbb B^{op},\spn)$. It is the point of the next section to show that this is in fact an equivalence of virtual double categories. First, however, let us consider some examples.
\end{remark}

\subsection{Examples}

Let $\mathscr C$ denote a category with finite limits. Then $\Span(\mathscr C)$ is a double category. As in \cite{FrameworkGenMulticatz}, denote $\dblmod(\Span(\mathscr C))$ by $\prof(\mathscr C)$. Several choices of $\C$ are of interest. Modules in $\prof(\C)$ are already known by well-established terminology. 

\begin{define}[Cf. \S 2.41 of \cite{TT}] \label{def: internal profunctor and transformation} Let $\mathscr C$ denote a category with finite limits; and let $\mathbb C$ and $\mathbb D$ denote internal categories. An \textbf{internal profunctor} $M\colon \mathbb C\slashedrightarrow\mathbb D$ is a module, i.e. a proarrow in $\prof(\mathscr C)$. A \textbf{multicell of internal profunctors} is thus a multicell as above.
\end{define}

\begin{example} The virtual double category $\prof(\set)$ is $\prof$. That is, a monoid in $\Span(set)$ is a category. A unit proarrow for such $\C$ is thus the span $\C_0 \leftarrow \C_1 \to \C_0$ formed from the domain and codomain maps with actions given by composition.
\end{example}

\begin{example} \label{example: prof cat} Letting $\C = \cat$ as a 1-category, $\prof(\cat)$ consists of usual double categories and double functors as the objects and arrows. Internal profunctors $M\colon\mathbb A\slashedrightarrow \mathbb B$ between double categories consist of a span $\mathbb A_0 \xleftarrow{\partial_0} \mathscr M \xrightarrow{\partial_1} \mathbb B_0$ and left and right action functors
\[ L\colon \mathbb A_1\times_{\mathbb A_0} \mathscr M \longrightarrow \mathscr M \qquad\qquad R\colon \mathscr M\times_{\mathbb B_0} \mathbb B_1 \longrightarrow \mathscr M
\]
satisfying the axioms above. A multicell of internal profunctors $(M_1,\dots, M_k) \Rightarrow N$ thus consists of a functor
\[ m\colon \mathscr M^1\times_{\mathbb A^1_0} \cdots \times_{\mathbb A^k_0}\mathscr M^k \longrightarrow \mathscr N 
\]
from the vertex of the composite of $(M_1,\dots, M_k)$, making a morphism of spans, and satisfying the various equivariance requirements as in the definition. Notice that owing to the peculiarities of the cell structure of $\mathbf 1$ as in Example \ref{terminal object in virt dbl cats}, a point $\mathbb D \colon \mathbf 1\to\prof(\cat)$ is a double category $\mathbb D$, with the identity double functor $1\colon \mathbb D\to\mathbb D$, the unit proarrow $u\colon\mathbb D\slashedrightarrow \mathbb D$, namely, the span formed by the external source and target functors with actions given by external composition, and finally multicells of all arities given by iterated external composition. This can all be generalized to $\cat(\C)$ for arbitrary $\C$ with finite limits. \end{example}

\begin{example} Let $\C = \cat^{\mathbf 2}$, the ``arrow category" of $\cat$ and consider $\prof(\cat^{\mathbf 2})$. A monoid is then a double functor, a morphism is a commutative square of double functors. A module consists of two modules in the former sense -- one between domains of the two double categories and one between the codomains; the vertices of these modules are related by a functor making a morphism of spans. Multicells have a similar ``two-tiered" structure. \end{example}

\begin{example}  Letting $\C=\dfib$, the virtual double category $\prof(\dfib)$ is the sub-virtual double category of the previous example where all the objects are not just double functors but are instead \emph{discrete double fibrations}. There is a codomain functor
\[ \cod\colon\prof(\dfib) \longrightarrow \prof(\cat)
\]
taking an object $P\colon \mathbb E\to\mathbb B$ its codomain $\mathbb B$ and every proarrow $M\colon P\slashedrightarrow Q$ to the module between double categories giving the codomains of $M$. Take $\prof(\dfib)/\mathbb B$ to be the pullback of $\cod$ in $\vdbl$ along the point $\mathbb B \colon \mathbf 1 \to \prof(\cat)$. 
\end{example}

\begin{define} \label{defn: virt dbl cat structure on ddfibs} The virtual double category of discrete double fibrations over a double category $\mathbb B$ is $\prof(\dfib)/\mathbb B$. Denote this by $\dblfib(\mathbb B)$.
\end{define}

\begin{remark} \label{description of internal profunctors and multicells} A module between discrete double fibrations $M\colon P\slashedrightarrow Q$ thus consists of a discrete fibration $M\colon\mathscr M\to\mathbb B_1$ and a morphism of spans
$$\xymatrix{
\mathbb E_0 \ar[d]_{P_0} & \mathscr M \ar[l]_{\partial_0} \ar[r]^{\partial_1} \ar[d]^{M} & \mathbb G_0 \ar[d]^{Q_0} \\
\mathbb B_0 & \mathbb B_1 \ar[l]^{\src} \ar[r]_{\tgt} & \mathbb B_0
}$$
and left and right actions functors making commutative squares
$$\xymatrix{ \mathbb E_1\times_{\mathbb E_0} \mathscr M \ar[r]^{\;\;\;\;L} \ar[d]_{P_1\times M} & \mathscr M \ar[d]^{M} & & \mathscr M\times_{\mathbb G_0}\mathbb G_1 \ar[r]^{\;\;\;\;R} \ar[d]_{M\times Q_1} & \mathscr M \ar[d]^M \\ 
\mathbb B_1\times_{\mathbb B_0}\mathbb B_1 \ar[r]_{\;\;\;\;\;-\otimes -} & \mathbb B_1 & & \mathbb B_1\times_{\mathbb B_0}\mathbb B_1 \ar[r]_{\;\;\;\;\;-\otimes -} & \mathbb B_1
}$$
that satisfy the action requirements as in the definition. A multicell $\mu\colon (M^1,\dots, M^k) \Rightarrow N$ between such modules consists of a functor $\mu$ making a commutative square
$$\xymatrix{ \mathscr M^1\times_{\mathbb E_0^1}\cdots \times_{\mathbb E_0^k}\mathscr M^k \ar[d] \ar[rr]^{\qquad\mu} & & \mathscr N \ar[d]^N \\
\mathbb B_1\times_{\mathbb B_0} \cdots \times_{\mathbb B_0}\mathbb B_1 \ar[rr]_{\qquad-\otimes -\cdots -\otimes -} & & \mathbb B_1
}$$
satisfying the equivariance requirements above. 
\end{remark}

\begin{remark} As in \S 3.9 in \cite{FrameworkGenMulticatz}, the mod-construction $\dblmod(-)$ defines an endo-2-functor $\dblmod(-)\colon\vdbl\to\vdbl$. Another way to look at the codomain functor the previous example is that it is induced from the codomain functor $\cod \colon\dfib \to\cat$, passing first through $\Span(-)$ and then $\dblmod(-)$.
\end{remark}

\section{The Full Representation Theorem}

This section extends the result of Theorem \ref{Main Theorem 1}, culminating in a proof that elements construction extends to an equivalence of virtual double categories
\[ \dblfib(\mathbb B) \simeq \dbllax(\mathbb B^{op},\spn) 
\]
This appears below as Theorem \ref{Main Theorem 2}.

\subsection{Extending the Elements Construction}

The elements functor of Lemma \ref{lemma: elements is a functor} extends to one between virtual double categories. Needed are assignments on modules and multimodulations.

\begin{construction}[Elements from a Module] \label{module to elements construction}  Let $M\colon F\slashedrightarrow G$ denote a module between lax double functors as in Definition \ref{def: module}.  Construct a category $\dblelt(M)$ in the following way.  Objects are pairs $(m,s)$ with $m\colon B\slashedrightarrow C$ a proarrow of $\mathbb B$ and $s\in Mm$.  A morphism $(m,s)\to (n,t)$ is a cell $\alpha$ with source $m$ and target $n$ for which the equation $M\alpha(t)=s$ holds.  So defined, $\dblelt(M)$ is a category since $M$ is strictly functorial on cells. Notice that there are thus projection functors 
\[\dblelt(F)_0  \xleftarrow{\partial_0} \dblelt(M) \xrightarrow{\partial_1} \dblelt(G)_0
\]
taking an object $(m,s)$ to $\partial_0(v,s)=(B,\partial_0s)$ and $\partial_1(v,s)=(C,\partial_1s)$ and extended to morphisms as follows. Given a cell
$$\xymatrix{
A \ar@{}[dr]|{\alpha} \ar[d]_{f} \ar[r]^{m}|-@{|} & B \ar[d]^{g} \\
C \ar[r]_n|-@{|} & D
}$$
take $\partial_0\alpha$ to be the morphism $f\colon (A,\partial_0s) \to (B,\partial_0t)$ and analogously for $\partial_1\alpha$.  These are well-defined by the commutativity conditions coming with the morphism of spans $M\alpha$.  The assignments are then functorial by that assumed for $M$.
\end{construction}

\begin{construction}[Actions] \label{actions on module} Form the pullback of $\partial_0\colon \dblelt(M)\to\dblelt(F)_0$ along the target projection $\dblelt(F)_1\to\dblelt(F)_0$ and give assignments
\[ L\colon \dblelt(F)_1\times_{\dblelt(F)_0}\dblelt(M)\to\dblelt(M)
\]
that will amount to an action. Summarize these assignments on objects and arrows at once by the picture:
$$\xymatrix{
(A,x)\ar@{}[dr]|{\alpha} \ar[d]_f \ar[r]^{(m,u)}|-@{|} & (B, y)\ar[d]^g & (p,r)\ar[d]^{\beta} \ar@{}[drr]|{\mapsto} & & (p\otimes m, \lambda(u,r)) \ar[d]^{\beta\otimes\alpha} \\
(C,z) \ar[r]_{(n,v)}|-@{|} & (D,w) & (q,s) & & (q\otimes n,\lambda(v,s))
}$$
where $\lambda$ is the action cell coming with $M$. Of course $\alpha$ and $\beta$ are composable by the construction of the pullback, but it needs to be seen that the composite $\beta\otimes\alpha$ does give a morphism of $\dblelt(M)$  But this is equivalent to the validity of the equation
\[ M(\beta\otimes\alpha)(\lambda(v,s))= \lambda(u,r)
\]
But this holds by the naturality condition for $\lambda$ in Definition \ref{def: module}, since $u= F\alpha(v)$ and $M\beta(s)=r$ both hold by the construction of morphisms in $\dblelt(M)$.  So defined, $L$ is a functor by the strict interchange law in $\mathbb B$; by the fact that $M$ is strictly horizontally functorial; and by the normalization hypothesis for units.  A functor $R$ for a right action of $\dblelt(G)_1$ on $\dblelt(M)$ is constructed analogously.  It remains to see that the action axioms are satisfied and that they are suitably compatible, yielding an internal profunctor.
\end{construction}

\begin{prop} \label{elts welldefn on modules} The assignments of Construction \ref{actions on module} are well-defined functors yielding an internal profunctor between discrete double fibrations $\dblelt(M) \colon\dblelt(F)\slashedrightarrow \dblelt(G)$.
\end{prop}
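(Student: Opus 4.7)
The plan is to verify four items in turn: (a) the projection $\dblelt(M) \to \mathbb B_1$ sending $(m,s)\mapsto m$ and $\alpha \mapsto \alpha$ is a discrete fibration, fitting into the span over $\mathbb B_0 \leftarrow \mathbb B_1 \to \mathbb B_0$ required by Remark \ref{description of internal profunctors and multicells}; (b) the functors $L$ and $R$ make morphisms of spans over the external composition functor of $\mathbb B$; (c) the associativity and unit axioms hold for each action; and (d) the left and right actions are compatible. Taken together, these give the required internal profunctor in $\dfib$ between $\dblelt(F)$ and $\dblelt(G)$.

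For (a), given $(n,t)$ in $\dblelt(M)$ and a cell $\alpha\colon m\Rightarrow n$ in $\mathbb B_1$ with target $n$, the arrow $\alpha\colon (m, M\alpha(t)) \to (n,t)$ is the required lift; it is unique because the defining condition $M\alpha(t)=s$ on a morphism $(m,s)\to(n,t)$ determines $s$ outright. The source and target of $(m,s)$ in $\dblelt(F)_0$ and $\dblelt(G)_0$ are exactly $(\src m, \partial_0 s)$ and $(\tgt m, \partial_1 s)$, so the span sits over the base as required. For (b), well-definition and functoriality of $L$ were verified in Construction \ref{actions on module} using the naturality of $\lambda$, the internal functoriality of $M$, and the normalization convention; compatibility with external composition in $\mathbb B$ is immediate from the formula $L((p,r),(m,u)) = (p\otimes m,\lambda(u,r))$. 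The argument for $R$ is dual.

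The heart of the matter is (c) and (d), but these reduce to pointwise translations of the module axioms of Definition \ref{def: module}. Associativity of $L$, namely $L \circ (\otimes\times 1) = L\circ (1\times L)$, asks on elements that $\lambda_{p\otimes m,n}(\mu_{p,m}(u,r), t) = \lambda_{p, n\otimes m}(u,\lambda_{m,n}(r,t))$, which is the Associativity axiom for $\lambda$; here $\mu_{p,m}$ is external composition in $\dblelt(F)_1$, which reduces to external composition in $\mathbb B$ since $F$ is normalized on the base. The unit law for $L$ reduces to the Unit axiom of Definition \ref{def: module}. The right-action axioms follow analogously from the axioms for $\rho$, and the compatibility $L\circ (1\times R) = R\circ(L\times 1)$ reduces to the Compatibility axiom $\rho(\lambda(u,s),v)=\lambda(u,\rho(s,v))$.

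The only real obstacle is bookkeeping: every object, arrow, proarrow, and cell of $\dblelt(M)$ carries a base datum from $\mathbb B$ together with an element in a fiber of $F$, $G$, or $M$, and one must make sure that the pullbacks used to form the domains of $L$ and $R$ in $\dfib$ match the pullbacks over which the action cells $\lambda_{m,n}$ and $\rho_{m,n}$ are defined in Definition \ref{def: module}. Once this identification is made, each required axiom for an internal profunctor is the element-wise image of the corresponding axiom for $M$, so the verification closes formally.
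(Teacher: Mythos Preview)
Your proposal is correct and follows essentially the same route as the paper: chase elements around the action-associativity and compatibility squares and reduce them to the corresponding axioms in Definition \ref{def: module}. You are in fact more thorough than the paper in two respects: you include item (a), verifying that $\dblelt(M)\to\mathbb B_1$ is a discrete fibration (which the paper leaves implicit), and in (c) you correctly identify that associativity of $L$ requires the \emph{Associativity} axiom for $\lambda$ together with strict associativity in $\mathbb B$, whereas the paper's proof only names the latter. One small point of phrasing: your clause ``since $F$ is normalized on the base'' is slightly off---$F$ is merely lax, and external composition in $\dblelt(F)_1$ genuinely uses the laxity cell $\phi$; what you want to say is that the \emph{first} component of the composite lives in strict $\mathbb B$, while the second component is $\phi(u,r)$, and it is precisely this $\phi$ that the module Associativity axiom absorbs.
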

\begin{proof}  The action functors $L$ and $R$ are unital by the normalization assumption for vertical composition with units in $\mathbb B$.  Required are action iso cells such as 
$$\xymatrix{
\dblelt(F)_1\times_{\dblelt(F)_0}\dblelt(F)_1\times_{\dblelt(F)_0}\dblelt(M)\ar[d]_{\otimes\times 1}\ar[rr]^{\qquad1\times L} && \dblelt(F)_1\times_{\dblelt(F)_0}\dblelt(M) \ar[d]^{L}\\
\dblelt(F)_1\times_{\dblelt(F)_0}\dblelt(M)\ar[rr]_L
&& \dblelt(M)
}$$
and similarly for $R$. But chasing an object of the domain around either of the square as above and comparing, commutativity is given by associativity of proarrow composition in $\mathbb B$. Lastly, the actions $L$ and $R$ should be compatible in the sense that
$$\xymatrix{
\dblelt(F)_1\times_{\dblelt(F)_0}\dblelt(M)\times_{\dblelt(G)_0}\dblelt(G)_1 \ar[d]_{L\times 1}\ar[rr]^{\qquad 1\times R} & &
\dblelt(F)_1\times_{\dblelt(F)_0}\dblelt(M) \ar[d]^L \\
\dblelt(M)\times_{\dblelt(G)_0}\dblelt(G)_1 \ar[rr]_R & &
\dblelt(M)
}$$
commutes. But again chasing objects and arrows around each side of the square shows that commutativity follows from the compatibility assumption in Definition \ref{def: module}. \end{proof}

\begin{construction}[Elements from a Multimodulation]  Start with a multimodulation of contravariant lax $\spn$-valued functors
$$\xymatrix{
F^0 \ar@{}[drrr]|{\mu} \ar[d]_\tau \ar[r]^{M_1}|-@{|} & F^1 \ar[r]^{M_2}|-@{|} & \cdots \ar[r]^{M_k}|-@{|} & F^k \ar[d]^{\sigma} \\
G^0 \ar[rrr]_N|-@{|} & & & G^1
}$$
as in Definition \ref{multimodulation defn}.  This means that there are projection spans $\dblelt(F^{i-1})_0\leftarrow \mathscr \dblelt(M_i) \to\dblelt(F^{i})$ and one for $\mathscr N$, each with appropriate left and right actions as in Construction \ref{module to elements construction}. Define what will be a functor 
\[ \dblelt(\mu)\colon\dblelt(M_1) \times_{\dblelt(F^1)_0}\cdots \times_{\dblelt(F^{k-1})_0}\dblelt(M_k)\longrightarrow \dblelt(N)
\]
in the following way.  On objects take
\[ ((m_1,s_1),\dots, (m_k,s_k))\mapsto ([\mathbf m],\mu_{\mathbf m}(s))
\]
where $\mu_{\mathbf m}$ is the given function coming with $\mu$ and $s=(s_1,\dots s_k)$. An arrow of the supposed source is a sequence of externally composable cells $\theta_i\colon (m_i,s_i) \to (n_i,t_i)$.  Assign to such a sequence the morphism of $\mathscr N$ represented by their composite
\[ (\theta_1,\dots, \theta_k)\mapsto \theta_k\otimes\theta_{k-1}\otimes\cdots\otimes \theta_1.
\]
This does define a morphism $([\mathbf m],\mu_{\mathbf m}(s))\to ([\mathbf n],\mu_{\mathbf n}(t))$ of $\dblelt(N)$ by the strict composition for the module $N$ as in Definition \ref{def: module}. This functor has several naturality and equivariance properties, coming from the assumed properties of the original multimodulation $\mu$.  For example, notice that $\dblelt(\mu)$ commutes with the projections and the $0$-level of the induced double functors $\dblelt(\tau)_0$ and $\dblelt(\sigma)_0$ by construction.  Further properties are summarized in the next result.
\end{construction}

\begin{prop} \label{elts welldefn on cells}  The functor $\dblelt(\mu)$ of Construction \ref{module to elements construction} defines a multicell between internal profunctors of the form
$$\xymatrix{
\dblelt(F^0) \ar@{}[drrr]|{\dblelt(\mu)} \ar[d] \ar[r]^{\dblelt(M_1)}|-@{|} &\dblelt(F^1)\ar[r]^{\;\;\;\dblelt(M_2)}|-@{|} &\cdots \ar[r]^{\dblelt(M_k)\;\;\;\;}|-@{|}& \dblelt(F^k)\ar[d] \\
\dblelt(G^0) \ar[rrr]_{\dblelt(N)}|-@{|} & & & \dblelt(G^1)
}$$
This completes assignments for the elements functor $\dblelt(-)\colon \dbllax(\mathbb B^{op},\spn)\to \dblfib(\mathbb B)$ between virtual double categories.
\end{prop}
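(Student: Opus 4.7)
The plan is to first check that $\dblelt(\mu)$ really is a multicell of $\prof(\dfib)$ lying over $\mathbb B$, and then to verify that the assignments $M\mapsto\dblelt(M)$ and $\mu\mapsto\dblelt(\mu)$ are functorial. As noted already at the end of Construction~\ref{module to elements construction} (applied to $\mu$), the functor $\dblelt(\mu)$ commutes with the projections down to the pullback of copies of $\mathbb B_1$ along $\src, \tgt$, simply because $\mu_{\mathbf m}$ is a function from the set $M_1m_1\times\cdots\times M_km_k$ to the set $N[\mathbf m]$ indexed over proarrows of $\mathbb B$, and multi-composition in $\dblelt(N)$ reads off the underlying external composite in $\mathbb B$. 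Thus the diagram of Remark~\ref{description of internal profunctors and multicells} commutes.

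The main content is to verify the left, right, and inner equivariance axioms of Definition~\ref{def: monoids and modules}. Consider the left equivariance axiom. The unit proarrow on $\dblelt(F^0)$ is the span of arrows of $\dblelt(F^0)$, and, by Construction~\ref{actions on module}, the left action of this span on $\dblelt(M_1)$ is induced by the laxity cells of $F^0$ composed with the left action $\lambda$ of the module $M_1$. Chasing a tuple $(\alpha, (m_1,s_1),\ldots,(m_k,s_k))$ around the ``act first, then $\mu$" composite produces an element of $N[\mathbf m x]$ obtained by applying $\lambda$ to $(F^0\alpha(s_1),s_2,\ldots)$ and then $\mu_{[\mathbf m x]}$; chasing around the other side produces the application of $\mu_{[\mathbf m]}$ followed by the left $N$-action. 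Equality of these two paths is precisely the left equivariance axiom for $\mu$ in Definition~\ref{multimodulation defn}. Right and inner equivariance follow identically, with ``right equivariance" of $\mu$ giving the right case and ``inner equivariance" of $\mu$ giving the case where a unit proarrow is inserted between two $M_i$-factors. In each case the equality of underlying arrows of $\mathbb B$ is immediate, and the equality of the fibered elements is exactly the corresponding axiom for $\mu$.

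For functoriality of $\dblelt(-)$ as a functor of virtual double categories, note that identity modulations (whose components are the identity functions on each $Mm$) clearly induce identity functors at the level of elements. For composition, one must check that if $\mu$ is a multicomposite $\nu\circ(\mu_1,\ldots,\mu_k)$ of multimodulations, then $\dblelt(\mu)$ is the corresponding multicomposite $\dblelt(\nu)\circ(\dblelt(\mu_1),\ldots,\dblelt(\mu_k))$ in $\prof(\dfib)$. Both sides act on a composable tuple of elements in the fibered pullback by applying the component multimodulations in the same order and then combining via the action of $N$; the agreement reduces to associativity and compatibility properties of the composition of multimodulations in $\dbllax(\mathbb B^{op},\spn)$, which hold by construction in \cite{YonedaThyDblCatz} and \cite{ModuleComposition}.

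The main obstacle is really just the bookkeeping: one must patiently translate the diagrammatic equivariance axioms of Definition~\ref{multimodulation defn}, which are phrased in terms of laxity cells $\phi, \gamma$ of $F^i, G^j$ and the action cells $\lambda, \rho$ of each $M_i$ and $N$, into the equivariance axioms of Definition~\ref{def: monoids and modules}, where the ``monoid structure" on $\dblelt(F^i)$ is by construction built from these same laxity cells via Construction~\ref{actions on module}. No new conceptual ingredient is required past what is already present in Proposition~\ref{elts welldefn on modules}; it is only that the binary actions are replaced by $k$-ary actions of multimodulations.
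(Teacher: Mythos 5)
Your proposal is correct and follows essentially the same route as the paper: both arguments reduce the left, right, and inner equivariance of $\dblelt(\mu)$ to the corresponding equivariance axioms of $\mu$ in Definition \ref{multimodulation defn} by chasing elements through the relevant squares, note the already-observed commutativity at the $0$-level, and observe that functoriality is routine. You are in fact somewhat more explicit than the paper about preservation of identities and multicomposites (and there is a small notational slip in your left-equivariance chase, where the acted-on pair should be $(u,s_1)$ with $u\in F^0x$ rather than ``$F^0\alpha(s_1)$''), but the substance matches.
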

\begin{proof}  The appropriate commutativity at the $0$-level was observed above.  That the action is left equivariant is the statement that the square
$$\xymatrix{
\dblelt(F^0)_1\times_{\dblelt(F^0)_0}\dblelt(M_1)\times_{\dblelt(F^1)_0}\cdots \times_{\dblelt(F^{k-1})_0}\dblelt(M_k) \ar[rrr]^{\qquad\qquad\qquad \dblelt(\tau)\times\dblelt(\mu)} \ar[d]_{L\times 1} & & &
\dblelt(G^0)\times_{\dblelt(G^0)_0}\dblelt(N) \ar[d]^{L}\\
\dblelt(M_1)\times_{\dblelt(F^1)_0}\cdots \times_{\dblelt(F^{k-1})_0}\dblelt(M_k) \ar[rrr]_{\qquad\dblelt(\mu)} & & & 
\dblelt(N)
}$$
commutes. But chasing an object of the upper left corner around both sides of the square reveals that commutativity at the object level is precisely the left equivariance condition in Definition \ref{multimodulation defn}. Right and inner equivariance follow by the same type of argument. The assignments are already known to be well-defined on objects and morphisms. It follows easily that these assignments are suitably functorial in the sense of virtual double categories. \end{proof}

\subsection{Extending the Pseudo-Inverse}

Likewise, the pseudo-inverse of Lemma \ref{lemma:psd inv is a functor} extends to a functor of virtual double categories.

\begin{construction}[Pseudo-Inverse for Modules] \label{pseudo-inverse for modules construction} Let $M\colon P\slashedrightarrow Q$ denote an internal profunctor between discrete double fibrations $P\colon \mathbb E\to\mathbb B$ and $Q\colon \mathbb G\to\mathbb B$ as in Remark \ref{description of internal profunctors and multicells}. Construct what will be a module $F_M\colon F_P\slashedrightarrow F_Q$ between the associated lax functors $F_P$ and $F_Q$ from Lemma \ref{pseudo-inverse} in the following way.  Let $\mathscr M_m$ denote the inverse image of the proarrow $m\colon B\slashedrightarrow C$ of $\mathbb B$ under the discrete fibration $\Pi\colon \mathscr M\to\mathbb B_1$ coming with $M$.  To each such proarrow $m$ assign the span of sets
\[ \mathbb E_B \xleftarrow{\partial_0} \mathscr M_m \xrightarrow{\partial_1} \mathbb G_C
\]
Note that this is well-defined by the first assumed commutativity condition for $M$.  To each cell of $\mathbb B$, assign the morphism of spans
$$\xymatrix{
A\ar@{}[dr]|{\theta}\ar[d]_f \ar[r]^{m}|-@{|} & B \ar[d]^g & \ar@{}[drr]|{\mapsto} & & & \mathbb E_B \ar[d]_{f^*} & \ar[l] \mathscr M_n \ar[d]^{\theta^*} \ar[r] & \mathbb G_C \ar[d]^{g^*} \\
C \ar[r]_n|-@{|} & D & & & & \mathbb E_A & \ar[l] \mathscr M_m \ar[r] & \mathbb G_D
}$$
with $\theta^*\colon \mathscr M_n\to\mathscr M_m$ given by taking an object of $\mathscr M$ over $n$ to the domain of the unique morphism of $\mathscr M$ above $\theta$ via $\Pi\colon \mathscr M\to\mathbb B_1$.  This is well-defined and makes a span morphism. To complete the data, start with composable vertical arrows $m\colon A\slashedrightarrow B$ and $n\colon B\slashedrightarrow B$ and give assignments $\lambda$ and $\rho$ by using the given actions $L$ and $R$, taking
\[ \lambda_{m,n}\colon \mathbb E_m\times_{\mathbb E_D}\mathscr M_n \to \mathscr M_{n\otimes m} \qquad (\tilde m, \tilde n)\mapsto L(\tilde m, \tilde n)
\]
and similarly for $\rho$.  These are well-defined by the second row of commutativity conditions in Definition \ref{def: module}.  Additionally, these maps commute with the projections, in the sense that the diagrams
$$\xymatrix{
\mathbb E_A \ar@{=}[d] &\ar[l] \mathscr M_m\times_{\mathbb G_B}\mathbb Gn \ar[d]^{\rho} \ar[r] & \mathbb G_C \ar@{=}[d] & & & \mathbb E_A \ar@{=}[d] &\ar[l] \mathbb E_u\times_{\mathbb E_B}\mathscr M_n \ar[d]^\lambda \ar[r] & \mathbb G_C \ar@{=}[d] \\
\mathbb E_A & \ar[l] \mathscr M_{n\otimes m} \ar[r] & \mathbb G_C & & & \mathbb E_A & \ar[l] \mathscr M_{n\otimes m} \ar[r] & \mathbb G_C
}$$
both commute. 
\end{construction}

\begin{prop}  The assignments of Construction \ref{pseudo-inverse for modules construction} make $F_M\colon F_P\slashedrightarrow F_Q$ a module in the sense of Definition \ref{def: module}.
\end{prop}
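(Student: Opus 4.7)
The plan is to verify each of the five clauses of Definition \ref{def: module} -- functoriality, naturality, associativity, compatibility, and unit -- for $F_M$ by reducing it to an already-known property of the internal profunctor $M$ together with $L$ and $R$, combined with uniqueness of cartesian lifts for the discrete fibrations $P_0$, $P_1$, $Q_0$, $Q_1$ and $\Pi\colon\mathscr M\to\mathbb B_1$. Since $\lambda_{m,n}$ and $\rho_{m,n}$ in Construction \ref{pseudo-inverse for modules construction} are defined by restricting $L$ and $R$ to fibers over individual proarrows, every module coherence for $F_M$ should track back to a coherence verified for $L$ and $R$ in the proof of Proposition \ref{elts welldefn on modules}.

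I would begin with functoriality: because $\Pi$ is a discrete fibration, the unique lift of a composite cell is the composite of the unique lifts and the unique lift of an identity is an identity, so $F_M(\beta\alpha) = F_M(\beta)F_M(\alpha)$ and $F_M(1_m) = 1_{F_M m}$ hold as morphisms of spans of sets. For the naturality axiom involving an external composite $\beta\otimes\alpha$, I would chase an element $(\tilde p,\tilde q)$ of the fibered product $\mathbb E_p\times_{\mathbb E_D}\mathscr M_q$ around the naturality square: both routes produce an element of $\mathscr M_{n\otimes m}$, and one checks that both are the (unique) lift of $\beta\otimes\alpha$ in $\mathscr M$ having codomain $L(\tilde p,\tilde q)$; uniqueness then forces the required equality. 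Right naturality for $\rho$ is symmetric.

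Associativity of $\lambda$ reduces to the statement that $L$ is strictly associative as an action functor, which was already observed in the proof of Proposition \ref{elts welldefn on modules}, and similarly for $\rho$. The compatibility axiom between $\lambda$ and $\rho$ is precisely the commutative square between $L\times 1$ and $1\times R$ noted at the end of that same proof, restricted fiberwise. For the unit axioms, the laxity cell $\gamma_A\colon\mathbb E_A\to\mathbb E_{u_A}$ of $F_P$ in Construction Laxity Cells I sends $x$ to $u_x$, and normalization for $L$ on unit external arrows of $\mathbb E_1$ gives $L(u_x,\tilde m)=\tilde m$; hence $\lambda\circ(\gamma_A\times 1)$ is the identity multicell, as required, and symmetrically for $\rho$.

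The step I expect to be the main obstacle is the naturality axiom, since it is the only clause that genuinely mixes the cell-level assignment $\theta\mapsto\theta^*$ (defined by unique lifting through $\Pi$) with the action functor $L$ (whose coherence in Proposition \ref{elts welldefn on modules} was stated at the level of objects and morphisms of fibers). The reduction should nevertheless go through cleanly once one observes that $\theta^*$ is itself defined by uniqueness of lifts and that $L$ commutes strictly with the projection to $\mathbb B_1$, so the two composites in the naturality square lie over the same cell of $\mathbb B$ and share a codomain.
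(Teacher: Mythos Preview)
Your verification is correct and is precisely a detailed unpacking of the paper's one-line proof, which says only that the module axioms for $F_M$ follow from ``the corresponding properties of the original module $M$, together with the fact that $\Pi\colon\mathscr M\to\mathbb B_1$ is a discrete fibration.''

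There is, however, a logical misdirection in your write-up worth correcting. You repeatedly cite Proposition \ref{elts welldefn on modules} as the place where the associativity, compatibility, and unit properties of $L$ and $R$ are established. That proposition runs in the \emph{opposite} direction: it starts from a module between lax functors, \emph{constructs} action functors $L$ and $R$ on the elements categories, and then proves they satisfy the internal-profunctor axioms. In the present proposition you are starting from an internal profunctor $M\colon P\slashedrightarrow Q$ in $\dblfib(\mathbb B)$ as in Remark \ref{description of internal profunctors and multicells}, so $L$ and $R$ already satisfy the axioms of Definition \ref{def: monoids and modules} \emph{by hypothesis}. Your reductions for associativity, compatibility, and unit are correct, but they terminate at assumptions on $M$, not at anything proved in Proposition \ref{elts welldefn on modules}. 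Once you adjust the citations accordingly, the argument is complete and coincides with the paper's.
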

\begin{proof}  All of the requirements for $F_M$ to be a module between lax functors are met by the corresponding properties of the original module $M$, together with the fact that $\Pi\colon \mathscr M\to\mathbb B_1$ is a discrete fibration. \end{proof}

\begin{construction}[Pseudo-Inverse Assignment on Modulations] \label{ps inv multimods} Start with a modulation $U$ in $\dblfib(\mathbb B)$ as in Remark \ref{description of internal profunctors and multicells}.  Thus, in particular, we have a functor
\[ U\colon \mathscr M^1\times_{\mathbb E^1_0} \cdots \times_{\mathbb E^{k-1}_0} \mathscr M^k\longrightarrow \mathscr N
\]
commuting with the projections to the end factors and commuting with the $(k-1)$-fold proarrow composition on $\mathbb B$.  Required is a multi-modulation
$$\xymatrix{
F_{P^0} \ar@{}[drrr]|{F_U} \ar[r]^{F_{M_1}}|-@{|} \ar[d]_{F_H} & F_{P^1} \ar[r]^{F_{M_1}}|-@{|} & \cdots \ar[r]^{F_{M_k}}|-@{|} & F_{P^k} \ar[d]^{F_K} \\
F_{Q^0} \ar[rrr]_{F_N}|-@{|} & & & F_{Q^1} 
}$$
Unpacking the constructions at a path of proarrows $\mathbf m=(m_1,\dots m_k)$, this is just to ask for a corresponding set function 
\[ \mathscr M^1_{m_1}\times_{\mathbb E^1_{A_1}}\times\cdots \times_{\mathbb E^{k-1}_{A_{k-1}}}\mathscr M_{m_k}\longrightarrow \mathscr N_{[\mathbf m]}
\]
which is given by the arrow part of $U$, namely, $U_1$. Well-definition follows from the fact that $U$ commutes with the projections to $\mathbb B$ and the $(k-1)$-fold iterated proarrow composition of $\mathbb B$.
\end{construction}

\begin{prop}  The choice of $(F_U)_{\mathbf m}= U_1$ in Construction \ref{ps inv multimods} results in a multimodulation of modules between lax functors as in Definition \ref{multimodulation defn}. This extends the functor $F_{(-)}$ to a functor of virtual double categories
\[ F_{(-)} \colon\dblfib(\mathbb B) \longrightarrow \dbllax(\mathbb B^{op},\spn).
\]
\end{prop}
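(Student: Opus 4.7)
The plan is to verify the axioms of Definition \ref{multimodulation defn} one at a time, translating each into a statement already known to hold for $U$ as a multicell in $\dblfib(\mathbb B)$, and then to address functoriality of $F_{(-)}$ separately. Throughout, the key rigidity tool is that each $\Pi \colon \mathscr M \to \mathbb B_1$ is a discrete fibration, so any two cells of $\mathscr M$ lying over the same cell of $\mathbb B_1$ with the same codomain are equal.

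First I would dispose of the naturality axiom. Given a path of cells $\theta_i \colon m_i \Rightarrow n_i$ in $\mathbb B$, the component $F_{M_i} \theta_i$ was built in Construction \ref{pseudo-inverse for modules construction} as the transition function $\theta_i^* \colon \mathscr M^i_{n_i} \to \mathscr M^i_{m_i}$ extracted from the unique lifts under $\Pi \colon \mathscr M^i \to \mathbb B_1$; similarly $F_N[\theta]$ is the transition function $[\theta]^* \colon \mathscr N_{[\mathbf n]} \to \mathscr N_{[\mathbf m]}$. The required equality of the two composites reduces to the assertion that $U_1$ commutes with these transition functions. Since $U$ is a functor over $\mathbb B_1$, applying $U_1$ to a lift of $[\theta]$ produces a cell of $\mathscr N$ over $[\theta]$ with codomain $U_1(\mathbf n\text{-tuple})$, and by uniqueness of such lifts this must agree with the canonical $[\theta]^*$ applied to $U_1$ of the target tuple. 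This is the desired equality.

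Next I would handle the three equivariance axioms. For left equivariance, unpacking the definitions of $\lambda$ on $F_{M_1}$ and on $F_N$ (both given by the action functors $L$ in $\dblfib(\mathbb B)$ restricted to fibers), the axiom becomes the commutativity at the level of $\mathbb B$-fibers of exactly the square recorded in the left equivariance clause of Definition \ref{def: monoids and modules} for $U$. Hence the axiom holds automatically. Right equivariance is formally identical using the right actions $R$, and inner equivariance likewise unpacks to the inner equivariance square for $U$. In each case no separate computation is required; the axioms are inherited from $U$.

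Finally, for functoriality of $F_{(-)} \colon \dblfib(\mathbb B) \to \dbllax(\mathbb B^{op},\spn)$ as a functor of virtual double categories, I would observe that assignments on objects, arrows, and modules have already been verified to respect identities, domains, codomains, sources, and targets. Preservation of identity multicells is immediate, since an identity multicell in $\dblfib(\mathbb B)$ is an identity functor on fibers, which yields the identity multimodulation with $(F_U)_{\mathbf m} = \mathrm{id}$. Preservation of multi-composition follows because both composition in $\dblfib(\mathbb B)$ and the composition of multimodulations in $\dbllax(\mathbb B^{op},\spn)$ are defined fiberwise via ordinary function composition. The principal obstacle throughout is the bookkeeping: each axiom of a multimodulation involves a diagram that, when spelled out, carries laxity cells and repeated pullbacks of fibers, and one must consistently invoke the discrete-fibration property of $\Pi$ to pass between a constructed lift and the output of $U_1$ applied to a tuple; once this identification is made, the verification is routine.
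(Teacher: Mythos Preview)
Your proposal is correct and follows essentially the same approach as the paper: naturality is deduced from the discrete-fibration property of the projection $\Pi\colon\mathscr N\to\mathbb B_1$ together with the construction of the transition functions, and left, right, and inner equivariance are each reduced to the corresponding equivariance properties of $U$ as a multicell in $\dblfib(\mathbb B)$. Your treatment is more explicit than the paper's (particularly the final paragraph on functoriality, which the paper leaves implicit), but the underlying argument is the same.
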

\begin{proof}  The horizontal naturality condition holds by construction of the transition functions corresponding to cells $\theta$ and because $\Pi_1\colon \mathscr N\to\mathbb B_1$ is a discrete fibration.  Right, left and inner equivariance then follow from the corresponding properties of the original functor $U$.  \end{proof}

\subsection{The Equivalence of Virtual Double Categories}

The extended elements construction and the purported pseudo-inverse induce an equivalence of virtual double categories, leading to the full representation theorem, namely, Theorem \ref{Main Theorem 2} below.

\begin{construction} \label{equiv 1 redux} Extend the isomorphism of Construction \ref{equiv iso 1} to an isomorphism of functors of virtual double categories. The required multimodulation for the cell-components is straightforward to produce. Given $M\colon H\slashedrightarrow G$ and a proarrow $m\colon B\slashedrightarrow C$ of $\mathbb B$, define a function
\[ \eta_{M,m}\colon Mm\longrightarrow \dblelt(M)_m\qquad  s\mapsto (m,s)
\]
again just adding in an index. This is a bijection fitting into a morphism of spans
$$\xymatrix{
HB \ar[d]_{\eta_{H,B}} & \ar[l] Mm \ar[d]^{\eta_{M,m}} \ar[r] & GC \ar[d]^{\eta_{G,C}} \\
\dblelt(H)_B & \ar[l] \dblelt(M)_m \ar[r] & \dblelt(G)_C
}$$
that defines the required invertible modulation
$$\xymatrix{
\ar@{}[dr]|{\eta_M} H \ar[d]_{\eta_H} \ar[r]^{M}|-@{|} & G \ar[d]^{\eta_G} \\
\dblelt(F_H) \ar[r]_{\dblelt(M)}|-@{|} & \dblelt(F_G)
}$$
as in Definition \ref{multimodulation defn} by construction of the elements functor and its purported pseudo-inverse.  This is easy to check from the definitions.
\end{construction}  

\begin{prop} The assignments in Construction \ref{equiv 1 redux} yield a natural isomorphism of functors of virtual double categories $\eta\colon 1\cong \dblelt(F_{(-)})$.
\end{prop}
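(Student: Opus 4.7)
The plan is to reduce everything to three observations: that each component $\eta_M$ at a module is a componentwise bijection, that the module structures on $M$ and $\dblelt(M)$ were built from the same underlying cells and actions, and that the multimodulation $\dblelt(\mu)$ was constructed so as to commute on the nose with the indexing bijections. Together with the 1-categorical naturality already proven in Theorem \ref{Main Theorem 1}, the remaining obligations come from Definition \ref{def: transf of funct of virt dble cats}: each $\eta_M$ must be a valid (invertible) modulation, and Cell Naturality must hold for every multimodulation.

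First I would verify that each $\eta_M\colon M \to \dblelt(M)$ is an arity-one multimodulation in the sense of Definition \ref{multimodulation defn}. The component $\eta_{M,m}\colon Mm \to \dblelt(M)_m$ sending $s \mapsto (m,s)$ is a bijection for each proarrow $m$, with inverse dropping the indexing. The cell-naturality axiom for a cell $\theta\colon m \Rightarrow n$ holds because the transition map $\theta^*$ on $\dblelt(M)$ from Construction \ref{pseudo-inverse for modules construction} was defined to equal $M\theta$ modulo reindexing. The left, right, and inner equivariance axioms hold because the action functors $L$ and $R$ giving $\dblelt(M)$ its structure in Construction \ref{actions on module} were defined by applying the action cells $\lambda$ and $\rho$ of $M$ itself. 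Each of these reduces to a one-line chase. Invertibility then follows because the componentwise inverse $(m,s) \mapsto s$ assembles into a modulation by the same argument.

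Second I would verify Cell Naturality. Given a multimodulation $\mu\colon (M_1, \dots, M_k) \Rightarrow N$ and a path of proarrows $\mathbf m = (m_1, \dots, m_k)$, chase an arbitrary tuple $(s_1, \dots, s_k)$ with $s_i \in M_i m_i$ around both composites in the square of Definition \ref{def: transf of funct of virt dble cats}. One route applies the $\eta_{M_i,m_i}$ first to produce the reindexed tuple $((m_1, s_1), \dots, (m_k, s_k))$ and then applies $\dblelt(\mu)$; the other applies $\mu_{\mathbf m}$ to $(s_1, \dots, s_k)$ first and then $\eta_{N,[\mathbf m]}$. By the definition of $\dblelt(\mu)$ on objects from Construction \ref{module to elements construction} (Proposition \ref{elts welldefn on cells}), both routes yield $([\mathbf m], \mu_{\mathbf m}(s_1, \dots, s_k))$. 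Agreement on arrows reduces similarly to the strict functoriality of $\mu$ on cells. The arrow-naturality clause of Definition \ref{def: transf of funct of virt dble cats} is nothing other than the 1-categorical naturality recorded in Theorem \ref{Main Theorem 1}, so there is nothing further to check at that level.

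The principal obstacle is the volume of axioms rather than their depth: writing out all three equivariance laws and Cell Naturality for arbitrary arity $k$ is tedious. Each obligation, however, collapses because the constructions on both sides of $\eta$ were built from precisely the same underlying cells; $\eta$ merely records the proarrow as an explicit index, and $\dblelt(-)$ was engineered so that indexing commutes with every operation in sight. Componentwise invertibility then upgrades the transformation to the claimed natural isomorphism in $\vdbl$.
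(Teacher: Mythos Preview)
Your proposal is correct and follows essentially the same route as the paper: both reduce the proof to the Cell Naturality condition of Definition \ref{def: transf of funct of virt dble cats} and verify it by the identical element chase, observing that indexing and applying $\mu_{\mathbf m}$ commute. You are somewhat more explicit than the paper in spelling out why each $\eta_M$ is itself a valid invertible modulation (the paper relegates this to the phrase ``easy to check from the definitions'' in Construction \ref{equiv 1 redux}), but this is filling in routine detail rather than a different argument.
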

\begin{proof}  As discussed above, the components of the purported transformation are all well-defined, so it remains only to check the ``cell naturality" condition of Definition \ref{def: transf of funct of virt dble cats}.  Start with a generic multimodulation
$$\xymatrix{
\ar@{}[drrr]|{\mu} H^0 \ar[d]_\tau \ar[r]^{M_1}|-@{|} & H^1 \ar[r]^{M_2}|-@{|} & \cdots \ar[r]^{M_k} & H^k \ar[d]^\sigma \\
G^0 \ar[rrr]_{N}|-@{|} & & & G^1 
}$$
in $\dbllax(\mathbb B^{op},\spn)$.  By construction, the composite on right side of the condition sends an $k$-tuple $s=(s_1,\dots, s_k)$ to $((m_1,s_1), \dots, (m_k,s_k))$ and then to $([\mathbf m],\mu_{[\mathbf m]}(s))$; where as that on the left side sends the same element to $\mu_{[\mathbf m]}(s)$ first and then to $([\mathbf m],\mu_{[\mathbf m]}(s))$.  The point is that by construction evaluating and indexing commute.  In any case, the two sides are equal, and $\eta$ so defined is a natural isomorphism as claimed.  \end{proof}

\begin{construction} \label{equiv 2 redux} Extend the natural isomorphism of Construction \ref{equiv iso 2} to one of functors of virtual double categories $\epsilon \colon \dblelt(F_{(-)})\cong 1$. Take an internal profunctor $M\colon P \slashedrightarrow Q$ of discrete double fibrations $P\colon\mathbb E\to\mathbb B$ and $Q\colon\mathbb G\to\mathbb B$. The required span morphism
$$\xymatrix{
\mathbb E_0 \ar@{=}[d] & \ar[l] \dblelt(F_M) \ar@{-->}[d]^{\epsilon_M} \ar[r] & \mathbb G_0 \ar@{=}[d] & & (m,s) \xrightarrow{\alpha} (n,t) \ar@{}[d]|{\rotatebox[origin=c]{270}{$\mapsto$}} \\
\mathbb E_0 & \ar[l] \mathscr M \ar[r] & \mathbb G_0 & & s\xrightarrow{!} t
}$$
is given by the fact that $M\colon \mathscr M \to\mathbb B_1$ is a discrete fibration. It is a functor and an isomorphism by uniqueness and equivariant by construction, making a morphism of internal profunctors.
\end{construction}

\begin{prop} The assignments in Construction \ref{equiv 2 redux} are a natural isomorphism of functors of virtual double categories.
\end{prop}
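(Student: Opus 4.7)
The plan is to reduce the claim to three sub-verifications and appeal in each case to work already done or to uniqueness of lifts in discrete fibrations. First, I would observe that on the underlying 1-category of $\dblfib(\mathbb B)$, the arrow components $\epsilon_P$ produced in Construction \ref{equiv 2 redux} agree with those of Construction \ref{equiv iso 2}, whose naturality for morphisms of discrete double fibrations was verified in the proof of Theorem \ref{Main Theorem 1}. Thus the arrow naturality axiom of Definition \ref{def: transf of funct of virt dble cats} needs no further checking.

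Second, I would verify that each cell component $\epsilon_M$ is a bona fide morphism of internal profunctors, i.e.\ that it is equivariant for the left and right actions. Bijectivity of the underlying span map is already recorded in Construction \ref{equiv 2 redux}. Equivariance then amounts to showing that the functor $\epsilon_M\colon \dblelt(F_M)\to \mathscr M$ intertwines the action constructed in Construction \ref{actions on module} with the action $L,R$ coming with $M$ as in Remark \ref{description of internal profunctors and multicells}. Tracing the two actions on a pair $(\alpha,(n,s))$ shows that both produce elements of $\mathscr M$ lying over the same composite cell of $\mathbb B_1$ and with the same codomain, so they coincide by uniqueness of lifts for the discrete fibration $\mathscr M\to \mathbb B_1$.

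Third, and as the main new content, I would check cell naturality for a generic multimodulation $U\colon (M_1,\dots, M_k)\Rightarrow N$ in $\dblfib(\mathbb B)$. Chasing an element $((m_1,s_1),\dots,(m_k,s_k))$ of the pullback $\dblelt(F_{M_1})\times_{\dblelt(F_{P^1})_0}\cdots \times_{\dblelt(F_{P^{k-1}})_0} \dblelt(F_{M_k})$ through the two composites, the clockwise route first applies $\dblelt(F_U)$, which by Construction \ref{ps inv multimods} is just the arrow part $U_1$ together with an index, and then strips indices via $\epsilon_N$; the counter-clockwise route first strips indices via the $\epsilon_{M_i}$ and then applies the functor $U$. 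Both yield $U_1(s_1,\dots,s_k)$, and the verification on arrows is identical once one invokes uniqueness of lifts in $N\colon \mathscr N\to \mathbb B_1$ to identify the two cells of $\mathscr N$ with common codomain and common image in $\mathbb B_1$.

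The main obstacle is bookkeeping rather than mathematical depth: the definitions of $\dblelt(-)$ and $F_{(-)}$ on modules and multimodulations are unwound through several layers of pullbacks, actions, and transition maps, and one must carefully match them against the span projections and actions on the discrete-fibration side. Once the correct identifications are set up, every required equation of functors reduces, exactly as in the proof of Theorem \ref{Main Theorem 1}, to the uniqueness of cartesian lifts in the discrete fibrations $\Pi\colon \mathscr M\to\mathbb B_1$ and $\Pi\colon \mathscr N\to \mathbb B_1$. Invertibility of $\epsilon$ as a whole then follows because each cell component is already noted to be an isomorphism in Construction \ref{equiv 2 redux}.
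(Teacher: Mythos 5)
Your proposal is correct and follows essentially the same route as the paper: the arrow-level naturality is inherited from Theorem \ref{Main Theorem 1}, the equivariance and invertibility of each $\epsilon_M$ are exactly what Construction \ref{equiv 2 redux} records, and the cell-naturality check is the same element chase showing both composites send $((m_1,s_1),\dots,(m_k,s_k))$ to $U_1(s_1,\dots,s_k)$, with the arrow case handled by uniqueness of lifts.
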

\begin{proof} The one condition to check is the ``Cell Naturality" of Definition \ref{def: transf of funct of virt dble cats}. Taking a multicell between internal profunctors
$$\xymatrix{
\ar@{}[drrr]|{\mu} P^0 \ar[d]_H \ar[r]^{M_1}|-@{|} & P^1 \ar[r]^{M_2}|-@{|} & \cdots \ar[r]^{M_k} & P^k \ar[d]^K \\
Q^0 \ar[rrr]_{N}|-@{|} & & & Q^1 
}$$
the statement of the condition reduces to checking that the equation
\[ \mu\circ (\epsilon_{M_1}\times \cdots \epsilon_{M_k}) = \epsilon_N\circ \dblelt(F_\mu)
\]
holds. But this is true by definition of $\dblelt(F_\mu)$ and the components of $\epsilon$. For on the one hand, a $k$-tuple $((m_1,s_1),\dots, (m_k,s_k))$ is sent to $s= (s_1,\dots, s_k)$ and then to $\mu(s)$. On the other hand, the same $k$-tuple is sent to $([\mathbf m],s)$ by $\dblelt(F_\mu)$ and then to $\mu(s)$ by $\epsilon_N$. The same kind of check works at the level of arrows. Thus, $\epsilon$ is a natural isomorphism.\end{proof}

\begin{theo} \label{Main Theorem 2}  There is an equivalence of virtual double categories
\[  \dbllax(\mathbb B^{op},\spn)\simeq \dblfib(\mathbf B)
\]
for any double category $\mathbb B$ induced by the elements functor $\dblelt(-)$.
\end{theo}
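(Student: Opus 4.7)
The plan is to assemble the pieces built throughout this section into an equivalence in the 2-category $\vdbl$, so that the bulk of the work has already been done by the preceding constructions and propositions. First I would invoke Proposition \ref{elts welldefn on cells} to obtain the elements functor $\dblelt(-)\colon \dbllax(\mathbb B^{op},\spn)\to\dblfib(\mathbb B)$ of virtual double categories, and the proposition immediately following Construction \ref{ps inv multimods} to obtain the pseudo-inverse $F_{(-)}\colon \dblfib(\mathbb B)\to\dbllax(\mathbb B^{op},\spn)$ as a functor of virtual double categories in the opposite direction. The purported invertible transformations $\eta\colon 1\cong F_{\dblelt(-)}$ and $\epsilon\colon \dblelt(F_{(-)})\cong 1$ between their composites and the identities are then exactly the content of Constructions \ref{equiv 1 redux} and \ref{equiv 2 redux}, and their cell-naturality in the sense of Definition \ref{def: transf of funct of virt dble cats} is established by the propositions stated right after them.

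The remaining step is therefore to observe that the components of $\eta$ and $\epsilon$ are genuinely invertible at every level of structure: objects, arrows, proarrows, and multicells. On objects and arrows this is precisely the content of Theorem \ref{Main Theorem 1}. On proarrows, the components of $\eta$ act by $s\mapsto (m,s)$, which is manifestly a bijection of sets and thus an isomorphism of spans; while the components of $\epsilon$ are the morphisms produced by unique lifting against the discrete fibration $\Pi\colon \mathscr M\to\mathbb B_1$, with inverse obtained by restoring the proarrow index. At the level of multicells, invertibility reduces further to the same kind of index-insertion/deletion bijection, since the datum of a multicell of internal profunctors on fibers is the same as that of a multimodulation between the corresponding modules.

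The main obstacle, though largely bookkeeping, is to confirm that these pointwise bijections strictly intertwine the action cells $\lambda$ and $\rho$ carried by the modules and internal profunctors in question, so that $\eta$ and $\epsilon$ assemble into honest invertible transformations in $\vdbl$ rather than mere object-by-object bijections. I expect this to follow directly from the construction: by Constructions \ref{module to elements construction} and \ref{pseudo-inverse for modules construction}, the actions carried by $\dblelt(F_M)$ and by $F_{\dblelt(M)}$ are defined from the actions of the original $M$ together with the index-insertion (or deletion) operation, so that compatibility with $\eta$ and $\epsilon$ holds on the nose. Once this is in hand, the pair $(\dblelt(-),F_{(-)})$ together with the invertible transformations $\eta$ and $\epsilon$ witnesses the claimed equivalence of virtual double categories.
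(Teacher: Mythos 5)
Your proposal is correct and follows essentially the same route as the paper: the paper's proof simply cites the two propositions following Constructions \ref{equiv 1 redux} and \ref{equiv 2 redux}, which supply exactly the invertible transformations $\eta$ and $\epsilon$ (with cell naturality) that you assemble. Your additional remarks on pointwise invertibility and compatibility with the action cells are just a more explicit spelling-out of what those constructions and propositions already establish.
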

\begin{proof} This is proved by Propositions \ref{equiv 1 redux} and \ref{equiv 2 redux}. \end{proof}

\section{Prospectus}

Let us close with a preview of forthcoming work relating to the present results.

\subsection{Monadicity}

It is well-known that ordinary discrete fibrations over a fixed base are monadic over a slice of the category of sets. This fact is of central importance in the elementary axiomatization of results relating to presheaves and sheaves in the language of an elementary topos \cite{DiaconescuThesis}, \cite{DiaconescuChangeOfBase}. In this development, ``base-valued functors" (i.e. presheaves) are axiomatized as certain algebras for a monad on a slice of the ambient topos.

Any parallel development in a double categorical setting of these presheaf results will require an analogous monadicity result. Forthcoming work will establish that discrete double fibrations over a fixed base double category are monadic over a certain slice of the double category of categories. Pursing some notion of ``double topos" as a forum for formal category theory, this will give a setting for elementary axiomatization of elements of presheaf and Yoneda theory for double categories.

\subsection{Double Fibrations}

The main definition of the paper anticipates the natural question about whether there is a more general notion of a ``double fibration" of which a discrete double fibration is a special case. For recall that each ordinary discrete fibration $F\colon\F\to\C$ between 1-categories is a (split) fibration in a more general sense. Split fibrations of course have lifting properties with respect to certain compatibly chosen ``cartesian arrows" and correspond via a category of elements construction to contravariant category-valued 2-functors on the base category. The question of the double-categorical analogue is the subject of forthcoming work with G. Cruttwell, D. Pronk and M. Szyld. The evidence of the correctness of the proposed definition will be a representation theorem like Theorem \ref{Main Theorem 1} in the present paper, but suitably upping the dimension of the representing structure.

\section{Acknowledgments}

Thanks to Dr. Dorette Pronk for supervising the author's thesis where some of the ideas for this paper were first conceived. Thanks also to Geoff Cruttwell, Dorette Pronk and Martin Szyld for a number of helpful conversations, comments, and suggestions on the material in this project and related research. Thanks in particular to Geoff Cruttwell for clarifying some questions on virtual double categories. Special thanks are due to Bob Par\'e for his encouragement when the author was just getting into double-categorical Yoneda theory.

\end{document}